\setlist[enumerate,1]{label=\textup{(\arabic*)}}
\numberwithin{equation}{section}
\newtheorem{theorem}[equation]{Theorem}
\newtheorem{lemma}[equation]{Lemma}
\newtheorem{proposition}[equation]{Proposition}
\newtheorem{corollary}[equation]{Corollary}
\theoremstyle{definition}
\newtheorem{definition}[equation]{Definition}
\theoremstyle{remark}
\newtheorem{remark}[equation]{Remark}
\renewcommand{\phi}{\varphi}
\DeclareMathSymbol{\boxprod}{\mathbin}{AMSa}{"03} 
\DeclareMathSymbol{\mixprod}{\mathbin}{AMSa}{"4F} 
\newcommand{\corrs}{\leftrightarrow}
\newcommand{\dirsum}{\oplus}
\newcommand{\disjunion}{\sqcup}
\newcommand{\dual}{^\vee}
\newcommand{\homeo}{\approx}
\newcommand{\includesin}{\hookrightarrow}
\newcommand{\iso}{\cong}
\newcommand{\Mackey}[1]{{\underline {#1}}}
\newcommand{\Susp}{\Sigma}
\newcommand{\susp}{\Sigma}
\newcommand{\tensor}{\otimes}
\newcommand{\C}{{\mathbb C}}
\newcommand{\PP}{\mathbb{P}}
\newcommand{\Z}{\mathbb{Z}}
\newcommand{\HS}{\Mackey{\mathbb{H}}}
\newcommand{\HH}{\Mackey H_\GG}
\newcommand{\cwd}[1][]{\widehat{c}_\omega^{\ifthenelse{\equal{#1}{}}{}{{\:#1}}}}
\newcommand{\cxwd}[1][]{\widehat{c}_{\chiw}^{\ifthenelse{\equal{#1}{}}{}{{\:#1}}}}
\newcommand{\cld}[1][]{\widehat{c}_{\lambda}^{\ifthenelse{\equal{#1}{}}{}{{\:#1}}}}
\newcommand{\cxld}[1][]{\widehat{c}_{\chi\lambda}^{\ifthenelse{\equal{#1}{}}{}{{\:#1}}}}
\newcommand{\clod}[1][]{\widehat{c}_{\omega_1}^{\ifthenelse{\equal{#1}{}}{}{{\:#1}}}}
\newcommand{\cxlod}[1][]{\widehat{c}_{\chi\omega_1}^{\ifthenelse{\equal{#1}{}}{}{{\:#1}}}}
\newcommand{\cltd}[1][]{\widehat{c}_{\omega_2}^{\ifthenelse{\equal{#1}{}}{}{{\:#1}}}}
\newcommand{\cxltd}[1][]{\widehat{c}_{\chi\omega_2}^{\ifthenelse{\equal{#1}{}}{}{{\:#1}}}}
\newcommand{\cltensd}[1][]{\widehat{c}_{\omega_1\tensor\omega_2}^{\ifthenelse{\equal{#1}{}}{}{{\:#1}}}}
\newcommand{\cxltensd}[1][]{\widehat{c}_{\chi\omega_1\tensor\omega_2}^{\ifthenelse{\equal{#1}{}}{}{{\:#1}}}}
\newcommand{\cd}[1][]{\widehat{c}^{\ifthenelse{\equal{#1}{}}{}{{\:#1}}}}
\newcommand{\cgd}[1][]{\widehat{c}_\pi^{\ifthenelse{\equal{#1}{}}{}{{\:#1}}}}
\newcommand{\Cpq}[2]{\C^{#1+#2\sigma}}
\newcommand{\Cp}[1]{\C^{#1}}
\newcommand{\Cq}[1]{\C^{#1\sigma}}
\newcommand{\Xpq}[2]{\PP(\Cpq{#1}{#2})}
\newcommand{\Xp}[1]{\PP(\C^{#1})}
\newcommand{\Xq}[1]{\PP(\Cq{#1})}
\newcommand{\chiw}{\chi\omega}
\newcommand{\Grpq}[3]{\mathrm{Gr}_{#1}(\Cpq{#2}{#3})}
\newcommand{\Grp}[2]{\mathrm{Gr}_{#1}(\Cp{#2})}
\newcommand{\Grq}[2]{\mathrm{Gr}_{#1}(\Cq{#2})}
\newcommand{\Qpq}[2]{Q_{#1,#2}}
\newcommand{\Qexpq}[2]{Q(\Cpq {#1}{#2})}   
\newcommand{\Qexp}[1]{Q(\Cp {#1})}
\newcommand{\Qexq}[1]{Q(\Cq {#1})}
\renewcommand{\vec}[1]{\accentset{\rightharpoonup}{#1}} 
\newcommand{\gr}{\Diamond}      
\newcommand{\ext}{\mathsf{\Lambda}}     
\newcommand{\divq}[1][]{\phi^\chi_{#1}}
\newcommand{\rels}[1]{\langle #1 \rangle}
\newcommand{\gens}[1]{\langle #1 \rangle}
\DeclareMathOperator{\grad}{grad}
\DeclareMathOperator{\Sym}{Sym}
\DeclareMathOperator{\Gl}{Gl}
\newcommand{\GG}{{C_2}}
\begin{document}

\title[Complex quadrics III]
{The $\GG$-equivariant ordinary cohomology of complex quadrics III: Exceptional cases}
\date{\today}

\author{Steven R. Costenoble}
\address{Steven R. Costenoble\\Department of Mathematics\\Hofstra University\\
  Hempstead, NY 11549, USA}
\email{Steven.R.Costenoble@Hofstra.edu}
\author{Thomas Hudson}
\address{Thomas Hudson, College of Transdisciplinary Studies, DGIST, 
Daegu, 42988, Republic of Korea}
\email{hudson@dgist.ac.kr}

\keywords{Equivariant cohomology, equivariant characteristic classes, quadrics}

\subjclass[2020]{Primary 55N91;
Secondary 14N10, 14N15, 55N25, 57R91}

\abstract
In this, the last of three papers about $\GG$-equivariant complex quadrics,
we complete the calculation of the equivariant ordinary cohomology of smooth symmetric quadrics
in the cases where the fixed sets have more than two components.
These calculations imply one for a $\GG$-equivariant Grassmannian,
which we use to prove an equivariant refinement of the result that there are 27 lines
on a cubic surface in $\PP^3$.
\endabstract

\maketitle
\tableofcontents

\section{Introduction}

(Note: This is a preliminary version and will be rewritten so that its nomenclature and notation
is consistent with that of \cite{CH:QuadricsI}.)

This is the last of three papers about the $\GG$-equivariant ordinary cohomology
of smooth equivariant projective quadric varieties over $\C$.
The first, \cite{CH:QuadricsI}, looked at antisymmetric quadrics
and the second, \cite{CH:QuadricsII}, looked at symmetric quadrics,
but graded cohomology on $RO(\Pi BU(1))$ in all cases, glossing over the
fact that some cases can be given a larger grading because their
fixed sets have more than two components.
In this paper we look at those exceptional cases and complete the calculations
of their cohomologies with their natural, larger, gradings.

We will assume most of the background and notation from the earlier papers,
particularly the appendix in \cite{CH:QuadricsI} that summarizes what is needed
about equivariant ordinary cohomology and the cohomology of projective spaces.
We repeat here the definitions of the various types of symmetric quadrics.

\begin{definition}\ 
\begin{itemize}
    \item \emph{Quadrics of type (D,D)} are subvarieties of $\Xpq{2p}{2q}$ of the form
    \begin{multline*}
        \Qpq{2p}{2q} = \Qexpq{2p}{2q} = \\
        \bigl\{ [x_1:\cdots:x_p:u_p:\cdots:u_1:y_1:\cdots:y_q:v_q:\cdots:v_1] \in \Xpq{2p}{2q} \mid \\
                            \textstyle \sum_i x_iu_i + \sum_j y_jv_j = 0 \bigr\}.
    \end{multline*}

    \item \emph{Quadrics of type (B,D)} are subvarieties of $\Xpq{(2p+1)}{2q}$ of the form
    \begin{multline*}
        \Qpq{2p+1}{2q} = \Qexpq{(2p+1)}{2q} = \\
        \bigl\{ [x_1:\cdots:x_p:z:u_p:\cdots:u_1:y_1:\cdots:y_q:v_q:\cdots:v_1] \in \Xpq{(2p+1)}{2q} \mid \\
                            \textstyle \sum_i x_iu_i + z^2 + \sum_j y_jv_j = 0 \bigr\}.
    \end{multline*}

    \item \emph{Quadrics of type (D,B)} are subvarieties of $\Xpq{2p}{(2q+1)}$ of the form
    \begin{multline*}
        \Qpq{2p}{2q+1} = \Qexpq{2p}{(2q+1)} = \\
        \bigl\{ [x_1:\cdots:x_p:u_p:\cdots:u_1:y_1:\cdots:y_q:w:v_q:\cdots:v_1] \in \Xpq{2p}{(2q+1)} \mid \\
                            \textstyle \sum_i x_iu_i + \sum_j y_jv_j + w^2 = 0 \bigr\}.
    \end{multline*}
\end{itemize}
\end{definition}

As in the earlier papers, we will often use the abbreviated notation
\[
    [\vec x:\vec u:\vec y:w:\vec v] = [x_1:\cdots:x_p:u_p:\cdots:u_1:y_1:\cdots:y_q:w:v_q:\cdots:v_1]
\]
and the obvious variations for the other types.

We calculated the cohomologies of these quadrics (and a fourth type, (B,B)) in \cite{CH:QuadricsII},
but with grading $RO(\Pi BU(1))$ in all cases.
That is appropriate for $\Qpq mn$ when $m, n > 2$, as $\Qpq mn^\GG$ then has two components,
so $RO(\Pi \Qpq mn) \iso RO(\Pi BU(1))$ via the inclusions
$\Qpq mn\includesin \Xpq mn\includesin B U(1)$.
However, if $m=2$ or $n=2$, the fixed set has more than two components and
the induced map $RO(\Pi BU(1))\to RO(\Pi\Qpq mn)$ is no longer surjective.
This paper finishes the calculations in those cases by computing the cohomology with
the full $RO(\Pi\Qpq mn)$ grading.

Because $\Qpq mn^\GG$ will have more than two components, we need to label them carefully,
and our choice of labeling is explained in \S\ref{sec:notations}.
We complete the calculation of the cohomology of $\Qpq 2{2q+1}$ in Theorem~\ref{thm:db multiplicative} and
the calculation of the cohomology of $\Qpq 2{2q}$ for $q > 1$ in Theorem~\ref{thm:dd multiplicative}.
The cohomology of $\Qpq 22$ is determined in Theorem~\ref{thm:22 multiplicative}.

One interesting case is $\Qpq 24$, which is $\GG$-diffeomorphic to $\Grpq 222$,
the Grassmannian of 2-planes in $\Cpq 22$,
so our calculations also give the cohomology
of this Grassmannian. We use this in \S\ref{sec:27 lines} to give an equivariant generalization
of the classical result that there are exactly 27 lines on a smooth cubic hypersurface
in $\PP^3$, looking at cubic surfaces in $\Xpq 22$.
This duplicates and refines a calculation done by
Brazelton in \cite{Braz:equivenumerative}.
We obtained a similar result in \cite{CH:QuadricsI}, for cubic surfaces in $\Xpq 3{}$,
and gave a detailed comparison of our approach and Brazelton's there.

\section{Notations and conventions}\label{sec:notations}

Our notation for the components of the fixed sets of $\Qpq mn$ and the resulting elements
in its representation ring are based on the case with the most components, $\Qpq 22$.
We use the identification of $\Qpq 22$ with $\Xpq 1{}\times\Xpq 1{}$
given by the Segre embedding. To be precise, we modify the Segre embedding slightly by introducing a sign, defining
\begin{gather*}
    s\colon \Xpq 1{}\times \Xpq 1{} \to \Xpq 22 \\
    s([a_1:a_2],[b_1:b_2]) = [a_1 b_1: a_2b_2: a_1b_2: -a_2b_1],
\end{gather*}
whose image is precisely $\Qpq 22$.
Writing $P = \Xpq 1{}\times \Xpq 1{}$ for brevity, $P^\GG$ consists of four points
that are naturally labeled as follows, consistent with our notation in
\cite{CH:bt2}:
\[
    P^\GG = P^{00}\disjunion P^{01}\disjunion P^{10}\disjunion P^{11} 
\]
where
\begin{alignat*}{2}
    P^{00} &= \Xp{}\times\Xp{} &&= ([1:0],[1:0]) \\
    P^{01} &= \Xp{}\times\Xq{} &&= ([1:0],[0:1]) \\
    P^{10} &= \Xq{}\times\Xp{} &&= ([0:1],[1:0]) \\
    P^{11} &= \Xq{}\times\Xq{} &&= ([0:1],[0:1]).
\end{alignat*}
We label the components of $\Qpq 22^\GG$ to match:
\[
    \Qexpq 22^\GG = \Qpq 22^{00} \disjunion \Qpq 22^{01} \disjunion \Qpq 22^{10} \disjunion \Qpq 22^{11}
\]
where
\begin{align*}
    \Qpq 22^{00} &= [1:0:0:0] \\
    \Qpq 22^{01} &= [0:0:1:0] \\
    \Qpq 22^{10} &= [0:0:0:1] \\
    \Qpq 22^{11} &= [0:1:0:0].
\end{align*}
Thus,
\[
    RO(\Pi\Qpq 22) = \Z\{1,\sigma,\Omega_{00},\Omega_{01},\Omega_{10},\Omega_{11}\}/
        \rels{\textstyle \sum_{i,j} \Omega_{ij} = 2\sigma - 2},
\]
and the map $RO(\Pi B U(1)) \to RO(\Pi\Qpq 22)$
induced by $\Qpq 22\includesin \Xpq 22\includesin BU(1)$
is given by
\[
    \Omega_0 \mapsto \Omega_{00} + \Omega_{11} \qquad\text{and}\qquad
    \Omega_1 \mapsto \Omega_{01} + \Omega_{10}.
\]

We label the fixed set components of the other cases to be consistent with this one.
If $n > 2$ we write
\[
    \Qexpq 2n^\GG = \Qpq 2n^{00} \disjunion \Qpq 2n^{11} \disjunion \Qpq 2n^1
\]
where
\begin{align*}
    \Qpq 2n^{00} &= [1:0:\vec 0] \\
    \Qpq 2n^{11} &= [0:1:\vec 0] \\
    \Qpq 2n^1 &= \Qexq n.
\end{align*}
The representation ring is then
\[
    RO(\Qpq 2n) = \Z\{1,\sigma,\Omega_{00},\Omega_{11},\Omega_1\}/
        \rels{\Omega_{00} + \Omega_{11} + \Omega_1 = 2\sigma - 2}
\]
with the map $RO(\Pi B U(1)) \to RO(\Pi\Qpq 2n)$ given by
\[
    \Omega_0 \mapsto \Omega_{00} + \Omega_{11} \qquad\text{\and}\qquad \Omega_1 \mapsto \Omega_1.
\]

Similarly, if $m > 2$,
\[
    \Qexpq m2^\GG = \Qpq m2^0 \disjunion \Qpq m2^{01} \disjunion \Qpq m2^{10}
\]
where
\begin{align*}
    \Qpq m2^0 &= \Qexp m \\
    \Qpq m2^{01} &= [\vec 0:1:0] \\
    \Qpq m2^{10} &= [\vec 0:0:1]
\end{align*}
and we use the implied notation for the elements of the representation ring.

We will also be using inclusions of projective spaces corresponding to certain maximal isotropic affine
subspaces of $\Cpq mn$. For $\Qpq 22$, a relatively natural labeling of these maps is as follows.
\begin{equation}\label{eqn:inclusions}
    \begin{alignedat}{3}
    i_0&\colon \Xpq 1{}\includesin \Qpq 22 &\qquad&& i_0[x:y] &= [x:0:y:0] \\
    i_1&\colon \Xpq 1{}\includesin \Qpq 22 &&& i_1[x:y] &= [x:0:0:y] \\
    i_2&\colon \Xpq 1{}\includesin \Qpq 22 &&& i_2[x:y] &= [0:x:y:0] \\
    i_3&\colon \Xpq 1{}\includesin \Qpq 22 &&& i_3[x:y] &= [0:x:0:y]    
    \end{alignedat}
\end{equation}
We extend this to the other cases in a straightforward way as we consider them.

\section{Quadrics of type (B,D) and (D,B)}

Because of the diffeomorphism $\Qpq{2}{2q+1} \homeo \Qpq{2q+1}{2}$,
it suffices to calculate the cohomology of $\Qpq{2}{2q+1}$ here.
Recall the decomposition
\[
    \Qpq{2}{2q+1}^\GG = \Qpq{2}{2q+1}^{00} \disjunion \Qpq{2}{2q+1}^{11}\disjunion \Qpq{2}{2q+1}^1
\]
introduced in \S\ref{sec:notations}.

\begin{remark}
We know that we have elements
\begin{align*}
    \zeta_{00} &\in \HH^{\Omega_{00}}(\Qpq 2{2q+1}) \\
    \zeta_{11} &\in \HH^{\Omega_{11}}(\Qpq 2{2q+1}) \\
    \zeta_{1} &\in \HH^{\Omega_{1}}(\Qpq 2{2q+1})
\end{align*}
corresponding to the three components of $\Qpq 2{2q+1}^\GG$. We will
often write $\zeta_0 = \zeta_{00}\zeta_{11}$ as shorthand.
It is the pullback of $\zeta_0$ from $\Xpq 2{(2q+1)}$.
\end{remark}

\begin{remark}
Because $\Qpq 2{2q+1}$ has three components (when $q = 0$, we take the third component as there, but empty),
the fixed-point map lands in a direct sum of three groups:
\[
    (-)^\GG\colon \HH^\alpha(\Qpq 2{2q+1}) \to 
    \HH^{\alpha_{00}^\GG}(\Qpq 2{2q+1}^{00})\dirsum\HH^{\alpha_{11}^\GG}(\Qpq 2{2q+1}^{11})
    \dirsum\HH^{\alpha_1^\GG}(\Qpq 2{2q+1}^1),
\]
where $\alpha = (\alpha_{00},\alpha_{11},\alpha_1)$ gives the restrictions to $RO(\GG)$
of $\alpha$ on each component, and $\alpha_{00}^\GG$ is the dimension of the fixed set representation, and so on.
As a result, if $z$ is a cohomology element, we will write $z^\GG$
as a triple, always in the order given above.
\end{remark}

\begin{remark}\label{rem:x11 case}
We include the case $q = 0$, where we can identify $\Qpq 21 \homeo \Xpq 1{}$ via the
map $\Xpq 1{}\to \Qpq 21$ given by
\[
    [a:b] \mapsto [a^2:b^2:iab].
\]
We can identify the composite $f\colon\Xpq 1{}\to \Qpq 21\includesin \Xpq 2{}$
up to some irrelevant changes with the composite
of the diagonal map and the Segre embedding that classifies tensor products of line bundles.
This means that 
$f^*\omega$ is not the tautological bundle $\lambda$ over $\Xpq 1{}$ but, instead,
$f^*\omega = \lambda^{\tensor 2}$, or $f^* O(1) = O(2)$,
In Remark~\ref{rem:x11 cohomology} we will compare our calculation here to the
cohomology of $\Xpq 1{}$ as computed in \cite{CHTFiniteProjSpace}.
\end{remark}

In \cite{CH:QuadricsII} we used two inclusions of $\Xpq pq$ in $\Qpq{2p}{2q+1}$.
Here, we need to consider four. Define the following maps, consistent with those in (\ref{eqn:inclusions}).
\begin{alignat*}{3}
    i_0\colon \Xpq 1q &\includesin \Qpq{2}{2q+1} &\qquad&& i[x:\vec y] &= [x:0:\vec y:0:\vec 0] \\
    i_1\colon \Xpq 1q &\includesin \Qpq{2}{2q+1} &&& i[x:\vec y] &= [x:0:\vec 0:0:\vec y] \\
    i_2\colon \Xpq 1q &\includesin \Qpq{2}{2q+1} &&& i[x:\vec y] &= [0:x:\vec y:0:\vec 0] \\
    i_3\colon \Xpq 1q &\includesin \Qpq{2}{2q+1} &&& i[x:\vec y] &= [0:x:\vec 0:0:\vec y].
\end{alignat*}
The normal bundle to each of these maps is isomorphic to
$\nu = \omega\dual \dirsum (q+1)\chi\omega\dual - O(2)$,
which has dimension $\omega + (q+1)\chi\omega\dual - 2$, which we also denote by $\nu$.
Elaborating slightly on a result of \cite{CH:QuadricsII}, we have the following.

\begin{proposition}\label{prop:db cofibration}
We have cofibration sequences
\begin{align*}
    \Xpq 1q_+ &{}\xrightarrow{i_0} (\Qpq{2}{2q+1})_+ \to \susp^\nu i_3(\Xpq 1q)_+ \\
\intertext{and}
    \Xpq 1q_+ &{}\xrightarrow{i_2} (\Qpq{2}{2q+1})_+ \to \susp^\nu i_1(\Xpq 1q)_+
\end{align*}
\end{proposition}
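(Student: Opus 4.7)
The plan is to follow the same strategy as the analogous cofibration sequences in \cite{CH:QuadricsII}: exhibit the open complement of each maximal isotropic projective subvariety of $\Qpq 2{2q+1}$ as the total space of the normal bundle to the ``opposite'' maximal isotropic, so that the associated Thom space realizes the cofiber. The elaboration at these parameters is that four such maximal isotropic subvarieties are available---the images of $i_0, i_1, i_2, i_3$---organized naturally into two pairs of opposites, namely $(i_0,i_3)$ and $(i_2,i_1)$, each of which will produce one of the two cofibration sequences.

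For the first sequence I would define the projection
\[
    \pi\colon \Qpq 2{2q+1}\setminus i_0(\Xpq 1q) \to i_3(\Xpq 1q), \qquad [x_1:u_1:\vec y:w:\vec v] \mapsto [0:u_1:\vec 0:0:\vec v],
\]
and verify well-definedness directly from the quadric relation: off $i_0(\Xpq 1q)$ one has $(u_1,\vec v,w)\neq 0$, and $x_1 u_1 + \vec y\cdot\vec v + w^2 = 0$ forces $w = 0$ whenever $(u_1,\vec v) = 0$, contradicting the preceding. The map is equivariant since $u_1$ lies in the trivial and $\vec v$ in the sign summand of $\Cpq 1q = \Cp 1 \dirsum \Cq q$. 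Fixing local representatives $(u_1, \vec v)$ and parameterizing the fiber freely by $(\vec y, w) \in \Cq q \dirsum \Cq 1$ with $x_1 = -\vec y\cdot\vec v - w^2$ yields a system of local trivializations that glue into an equivariant vector bundle over $i_3(\Xpq 1q)$ with the predicted virtual representation $\nu$, exactly as in \cite{CH:QuadricsII}. The first cofibration sequence then follows from the Thom-space identification $\Qpq 2{2q+1}/i_0(\Xpq 1q) \simeq \susp^\nu i_3(\Xpq 1q)_+$.

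The second sequence is obtained by the same recipe with the pair $(i_2, i_1)$ in place of $(i_0, i_3)$: the projection $[x_1:u_1:\vec y:w:\vec v] \mapsto [x_1:0:\vec 0:0:\vec v]$ exhibits $\Qpq 2{2q+1}\setminus i_2(\Xpq 1q)$ as a vector bundle over $i_1(\Xpq 1q)$, the well-definedness argument is identical modulo the coordinate swap, and the linear automorphism of $\Qpq 2{2q+1}$ interchanging the two opposite-pair configurations identifies the normal representation as $\nu$ again. The main piece of work---and the one place that requires genuine care---is verifying that the local trivializations assemble into an honest $\GG$-equivariant vector bundle whose virtual representation matches $\nu \in RO(\Pi\Xpq 1q)$; since the corresponding step is carried out in \cite{CH:QuadricsII} for the two inclusions considered there, only minor adaptation is required.
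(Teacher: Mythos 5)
Your proof is correct and takes essentially the same approach as the paper: both exhibit the complement of $i_0(\Xpq 1q)$ (resp.\ $i_2(\Xpq 1q)$) in $\Qpq 2{2q+1}$ as the total space of the normal bundle over $i_3(\Xpq 1q)$ (resp.\ $i_1(\Xpq 1q)$) via the coordinate projection $[x:u:\vec y:w:\vec v]\mapsto[0:u:\vec 0:0:\vec v]$, so that the cofiber is the Thom space $\susp^\nu i_3(\Xpq 1q)_+$. The paper's proof consists of exactly this one-line identification; your verification of well-definedness from the quadric relation and of equivariance merely supplies details the paper leaves implicit.
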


\begin{proof}
The projection
\begin{align*}
   \Qpq{2}{2q+1} \setminus i_0(\Xpq 1q) &\to i_3(\Xpq 1q), \\
   [ x: u:\vec y:w:\vec v] &\mapsto [ 0: u:\vec 0:0:\vec v]
\end{align*}
can be identified with the normal bundle $\nu$, and similarly for $i_2$ and $i_1$.
\end{proof}

This implies two long exact sequences in cohomology. In the $RO(\Pi B U(1))$ grading,
we saw in \cite{CH:QuadricsII} that the one using $i_0$ and $i_3$
splits into short exact sequences, and the same is true for the other, in this grading.
In the larger grading $RO(\Pi\Qpq 2{2q+1})$, this is no longer true.
Instead, we shall see that each of the long exact sequences gives a short exact sequence in only certain gradings.

Let
\begin{align*}
    x_{1,q} &= (i_3)_!(1) = [i_3(\Xpq 1q)]^* \in \HH^\nu(\Qpq 2{2q+1}) \\
\intertext{and}
    x'_{1,q} &= (i_1)_!(1) = [i_1(\Xpq 1q)]^* \in \HH^\nu(\Qpq 2{2q+1}).
\end{align*}
The element $x_{1,q}$ is the one we called $x_{p,q}$ in general in \cite{CH:QuadricsII}.
If $p > 1$, then the analogue of $x'_{1,q}$, which we might call $x'_{p,q}$, would be the same element as 
$x_{p,q}$, but when $p=1$ they are different, as we can see by looking at
their fixed points.
Think of $x_{1,q}'$ as a temporary name, as we shall shortly see how to write it in
terms of $x_{1,q}$.
We note here the following.

\begin{lemma}\label{lem:db x divisibility}
$x_{1,q}$ is infinitely divisible by $\zeta_{00}$ and $x'_{1,q}$ is infinitely divisible by $\zeta_{11}$.
\end{lemma}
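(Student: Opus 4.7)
The plan is to exhibit each of the two classes as the pushforward of a submanifold whose support sits entirely away from one of the isolated fixed points, and then appeal to the standard divisibility criterion for equivariant characteristic classes summarized in the appendix of \cite{CH:QuadricsI}.

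First I would unwind the geometry. The map $i_3$ sends $[x:\vec y]$ to $[0:x:\vec 0:0:\vec y]$, so every point of $i_3(\Xpq 1q)$ has vanishing first homogeneous coordinate. In particular, $i_3(\Xpq 1q)$ is disjoint from $\Qpq 2{2q+1}^{00} = [1:0:\vec 0]$, and in fact lies in a closed $\GG$-subvariety whose $\GG$-fixed points consist only of $\Qpq 2{2q+1}^{11}$ and the points of $\Qpq 2{2q+1}^1$. Symmetrically, $i_1(\Xpq 1q) \subset \{[x:0:\vec 0:0:\vec y]\}$ is disjoint from $\Qpq 2{2q+1}^{11} = [0:1:\vec 0]$ and its closure contains no $\GG$-fixed point of the $11$ type.

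From this I would conclude that $x_{1,q}^\GG$ has a zero entry in its $00$-slot and $(x'_{1,q})^\GG$ has a zero entry in its $11$-slot, which is the initial divisibility input. To upgrade this to \emph{infinite} divisibility by $\zeta_{00}$ (respectively $\zeta_{11}$), I would invoke the general fact recalled in \cite[Appendix]{CH:QuadricsI}: when a pushforward class $M_!(1)$ comes from a $\GG$-submanifold $M$ contained in the complement of an open $\GG$-neighborhood of an isolated fixed point $F$, the corresponding Euler class $\zeta_F$ acts invertibly on the cohomology of that complement, so one obtains compatible classes $\widehat{x}^{(k)}$ with $\zeta_F^k \cdot \widehat{x}^{(k)}$ equal to the image of $M_!(1)$ for every $k\ge0$. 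Applied with $M = i_3(\Xpq 1q)$ and $F = \Qpq 2{2q+1}^{00}$ this yields the infinite divisibility of $x_{1,q}$ by $\zeta_{00}$, and the analogous application gives that of $x'_{1,q}$ by $\zeta_{11}$.

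The main obstacle is subtle: one-step divisibility follows at once from the cofiber sequence of Proposition~\ref{prop:db cofibration} together with the vanishing of the appropriate fixed-point restriction, but iterating requires that the ``quotient'' class $y$ with $x_{1,q} = \zeta_{00}\, y$ again restricts to zero on the $00$ component, and similarly at every subsequent stage. I expect this to be handled by observing that the whole sequence of quotients can be produced simultaneously from the fact that $i_3(\Xpq 1q)$ is contained in the closed complement of an equivariant tubular neighborhood of $\Qpq 2{2q+1}^{00}$, so the pullback of $x_{1,q}$ to that complement admits $\widehat{x}_{1,q}$ as a distinguished root. That is the step for which the explicit geometric model of $i_3$ is essential, rather than merely the restriction data.
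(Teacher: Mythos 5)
Your geometric starting point is exactly the one the paper uses: $i_3(\Xpq 1q)$ misses the fixed component $\Qpq 2{2q+1}^{00}$ and $i_1(\Xpq 1q)$ misses $\Qpq 2{2q+1}^{11}$, and this is what drives the divisibility. But the mechanism you propose for upgrading this to \emph{infinite} divisibility has a real gap. Your ``general fact'' produces classes $\widehat{x}^{(k)}$ in the cohomology of the \emph{complement} of a neighborhood of the fixed point, satisfying $\zeta_F^k\,\widehat{x}^{(k)} = x|_C$ there; the lemma, however, requires witnesses in $\HH^\gr(\Qpq 2{2q+1})$ itself, and you never explain how to transport the divided classes from the complement back to the whole quadric. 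Restriction goes the wrong way, and extending a class from the complement across the excised neighborhood is not automatic; your final paragraph acknowledges the difficulty (the quotient class must again vanish on the $00$-component at every stage) but resolves it only by asserting a ``distinguished root'' on the complement, which again lives in the wrong ring.

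The paper closes this gap with a one-line projection-formula argument that you should adopt. Because $i_3$ misses the $00$-component, the restriction $i_3^*(\zeta_{00})$ is already invertible in $\HH^\gr(\Xpq 1q)$ (no complements or tubular neighborhoods needed). Hence $(i_3)_!\bigl(i_3^*(\zeta_{00})^{-k}\bigr)$ is an honest global class on $\Qpq 2{2q+1}$ for every $k$, and the projection formula gives
\[
    \zeta_{00}^k \cdot (i_3)_!\bigl(i_3^*(\zeta_{00})^{-k}\bigr) = (i_3)_!(1) = x_{1,q},
\]
exhibiting explicit witnesses for divisibility by every power of $\zeta_{00}$ simultaneously. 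The iteration problem you worried about never arises, because the witnesses are constructed in one step on the source of the pushforward rather than by successively dividing on the target. The argument for $x'_{1,q}$ and $\zeta_{11}$ is identical with $i_1$ in place of $i_3$.
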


\begin{proof}
$i_3^*(\zeta_{11})$ is invertible in the cohomology of $\Xpq 1q$ because
$i_3^{-1}(\Qpq 1q^{00}) = \emptyset$.
Hence
\[
    \zeta_{00}^k \cdot (i_3)_!(\zeta_{00}^{-k}) = (i_3)_!(1) = x_{1,q},
\]
so $x_{1,q}$ is infinitely divisible by $\zeta_{00}$.
The argument for $x'_{1,q}$ is similar.
\end{proof}

In \cite{CH:QuadricsII} we defined an element which here would be written
\[
    \divq = \cxwd[q] - e^{-2}\kappa x_{1,q}.
\]
In \cite[Proposition~6.2]{CH:QuadricsII} 
we proved that
\begin{alignat*}{2}
    \cwd &\text{ is infinitely divisible by $\zeta_0$ and} \\
    \divq &\text{ is infinitely divisible by $\zeta_1$.}
\end{alignat*}
(Note that, if $q = 0$, then $\zeta_1$ is invertible because
the corresponding component of the fixed set is $\Qpq 21^1 = \emptyset$.)

We are now ready to see that we have the short exact sequences mentioned above.
Note that the cosets of $RO(\Pi BU(1))$ in $RO(\Pi\Qpq 2{2q+1})$
are the sets $m\Omega_{11} + RO(\Pi BU(1))$ for $m\in\Z$.

\begin{proposition}\label{prop:db splitting}
If $m \geq 0$, then there is a split short exact sequence
\begin{multline*}
    0 \to \susp^{\nu-m\Omega_0}\HH^{RO(\Pi BU(1))}(\Xpq 1q) \xrightarrow{(i_3)_!}
    \HH^{m\Omega_{11} + RO(\Pi BU(1))}(\Qpq 2{2q+1}) \\ \xrightarrow{i_0^*}
    \HH^{RO(\Pi BU(1))}(\Xpq 1q) \to 0.
\end{multline*}
If $m \leq 0$, then there is a split short exact sequence
\begin{multline*}
    0 \to \susp^\nu\HH^{RO(\Pi BU(1))}(\Xpq 1q) \xrightarrow{(i_1)_!}
    \HH^{m\Omega_{11} + RO(\Pi BU(1))}(\Qpq 2{2q+1}) \\ \xrightarrow{i_2^*}
    \susp^{-m\Omega_0}\HH^{RO(\Pi BU(1))}(\Xpq 1q) \to 0.
\end{multline*}
\end{proposition}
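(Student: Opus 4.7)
The plan is to show that each of the two long exact sequences arising from Proposition~\ref{prop:db cofibration} restricts to a short exact sequence in the stated gradings, by verifying that the relevant connecting homomorphisms vanish; the explicit splittings will come from lifting along the $RO(\Pi BU(1))$-graded splittings already established in \cite{CH:QuadricsII}, using multiplication by powers of $\zeta_{00}$ or $\zeta_{11}$ to pass between the two grading ranges.

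The key preliminary observation, obtained by the same argument as Lemma~\ref{lem:db x divisibility}, is a pair of invertibility facts. Since $i_0$ and $i_1$ miss the fixed component $\Qpq 2{2q+1}^{11}$, the classes $i_0^*\zeta_{11}$ and $i_1^*\zeta_{11}$ are invertible in $\HH^*(\Xpq 1q)$; dually, since $i_2$ and $i_3$ miss $\Qpq 2{2q+1}^{00}$, the classes $i_2^*\zeta_{00}$ and $i_3^*\zeta_{00}$ are invertible. One also needs to identify $i_0^*\Omega_{11}$ and $i_2^*\Omega_{11}$ inside $RO(\Pi\Xpq 1q)$ so that the grading shifts $\nu-m\Omega_0$ and $-m\Omega_0$ in the statement are correctly matched with the cohomology groups that naturally appear on the $\Xpq 1q$ side.

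For the $m\geq 0$ case I work with the first cofibration and prove surjectivity of $i_0^*$ at the grading $\alpha = m\Omega_{11}+\beta$ for every $\beta\in RO(\Pi BU(1))$. Given a class $y\in \HH^{i_0^*\alpha}(\Xpq 1q)$, invertibility of $i_0^*\zeta_{11}$ lets me uniquely write $y = (i_0^*\zeta_{11})^m y_0$ with $y_0\in \HH^\beta(\Xpq 1q)$; I then use the $RO(\Pi BU(1))$-graded splitting to lift $y_0$ to some $\tilde y_0\in \HH^\beta(\Qpq 2{2q+1})$, after which $\zeta_{11}^m \tilde y_0\in \HH^\alpha(\Qpq 2{2q+1})$ is a preimage of $y$ under $i_0^*$. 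Surjectivity at both $\alpha$ and $\alpha-1$ forces the two connecting maps at $\alpha$ to vanish, producing the short exact sequence, and the assignment $y_0 \mapsto \zeta_{11}^m \tilde y_0$ gives the splitting.

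The $m\leq 0$ case is analogous, using the second cofibration and the invertibility of $i_2^*\zeta_{00}$: given a class $y$ in the target grading, I lift the $RO(\Pi BU(1))$-graded class $(i_2^*\zeta_{00})^{-m}y$ to $\HH^*(\Qpq 2{2q+1})$ using the splitting of \cite{CH:QuadricsII}, then multiply the lift by $\zeta_{00}^{-m}\in\HH^{-m\Omega_{00}}(\Qpq 2{2q+1})$ (a non-negative power) to land in the grading $m\Omega_{11}+RO(\Pi BU(1))$; invertibility of $i_2^*\zeta_{00}$ ensures that this lift pulls back to $y$ itself, and the same recipe provides the splitting. I expect the main technical obstacle to be the grading bookkeeping: working out precisely how $i_k^*$ acts on $\Omega_{00},\Omega_{11},\Omega_1$ and the normal bundle class $\nu$, and confirming that the resulting shifts agree with the $\nu-m\Omega_0$ and $-m\Omega_0$ suspensions in the statement. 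Once these identifications are in hand, the rest of the argument is a routine bootstrap from the $RO(\Pi BU(1))$-graded case.
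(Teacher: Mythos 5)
Your proposal is correct and follows essentially the same route as the paper: establish surjectivity of $i_0^*$ (resp.\ $i_2^*$) in the coset $m\Omega_{11}+RO(\Pi BU(1))$ by bootstrapping from the $RO(\Pi BU(1))$-graded case via multiplication by $\zeta_{11}^m$ (resp.\ $\zeta_{00}^{-m}$), using that $\zeta_{11}$ and $\zeta_{00}$ restrict to units (in fact to $1$) along the inclusions that miss the corresponding fixed components, with the splitting then coming from freeness of the target. The only cosmetic differences are that the paper re-derives the $m=0$ surjectivity from the divisibility of $\cwd$ by $\zeta_0$ and $\divq$ by $\zeta_1$ (and disposes of $q=0$ as trivial) rather than simply citing the earlier splitting.
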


\begin{proof}
We need to show that $i_0^*$ and $i_2^*$ are surjective in the gradings indicated.
The splitting then follows from the freeness of the targets.
Surjectivity is clear if $q = 0$, because then $\Xpq 1q = \Xp{}$ is just a point. So assume that $q > 0$.

The effect $i_0^*$ has on gradings and cohomology is given by
\begin{align*}
    i_0^*(\Omega_{00}) &= \Omega_0 & i_0^*(\Omega_{11}) &= 0 & i_0^*(\Omega_1) &= \Omega_1 \\
    i_0^*(\zeta_{00}) &= \zeta_0 & i_0^*(\zeta_{11}) &= 1 & i_0^*(\zeta_1) &= \zeta_1.
\end{align*}
The effect on gradings shows that the target of $i_0^*$ is as given in the statement of the proposition.
When $m = 0$,
since $i_0^*(\cwd) = \cwd$ and $i_0^*(\divq) = \cxwd[q]$, the divisibility of $\cwd$ by $\zeta_0$ and $\divq$ by $\zeta_1$
shows that we can find expressions in $\zeta_0$, $\zeta_1$,
$\cwd$, $\cxwd$, and $\divq$ in gradings $RO(\Pi BU(1))$
mapping to any basis element of the cohomology of $\Xpq 1q$.
When $m > 0$, we can then multiply by
$\zeta_{11}^m$ to get elements in grading $m\Omega_{11} + RO(\Pi BU(1))$
mapping to basis elements under $i_0^*$, because $i_0^*(\zeta_{11}) = 1$.

Turning to $i_2^*$ and assuming $m\leq 0$, we have
\begin{align*}
    i_2^*(\Omega_{00}) &= 0 & i_2^*(\Omega_{11}) &= \Omega_0 & i_2^*(\Omega_1) &= \Omega_1 \\
    i_2^*(\zeta_{00}) &= 1 & i_2^*(\zeta_{11}) &= \zeta_0 & i_2^*(\zeta_1) &= \zeta_1.
\end{align*}
This shows that the target of $i_2^*$ is as claimed.
Again, when $m=0$, we can find elements in gradings $RO(\Pi BU(1))$ mapping to a basis under $i_2^*$.
When $m < 0$, we can then multiply those elements by $\zeta_{00}^{-m}$ to get elements
in gradings $m\Omega_{11} + RO(\Pi BU(1))$ (because $\Omega_{00} + \Omega_{11}\in RO(\Pi BU(1))$)
mapping to a basis under $i_2^*$.
\end{proof}

This gives the additive structure of the cohomology, which we can write as follows,
though we need to state carefully what the grading means.

\begin{corollary}\label{cor:db additive}
Additively,
\[
    \HH^\gr(\Qpq{2}{2q+1}) \iso \HH^\gr(\Xpq 1q) \dirsum \susp^\nu\HH^\gr(\Xpq 1q)
\]
where $\nu = \omega + (q+1)\chi\omega - 2$.
In gradings $m\Omega_{11} + RO(\Pi BU(1))$ with $m\geq 0$, 
the first summand is regraded via $i_0^*$ while the second is regraded via $i_3^*$;
if $m < 0$, the first summand is regraded via $i_2^*$ while the second is regraded via $i_1^*$.
\qed
\end{corollary}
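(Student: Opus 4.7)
The statement repackages the splittings of Proposition \ref{prop:db splitting} into a single uniform description over $RO(\Pi\Qpq 2{2q+1})$. My plan is as follows.

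First I observe that the cokernel of the map $RO(\Pi BU(1))\to RO(\Pi\Qpq 2{2q+1})$ induced by $\Qpq 2{2q+1}\includesin BU(1)$ is infinite cyclic, generated by the class of $\Omega_{11}$: the image is the $\Z$-submodule spanned by $1$, $\sigma$, $\Omega_0 = \Omega_{00}+\Omega_{11}$, and $\Omega_1$. Consequently
\[
RO(\Pi\Qpq 2{2q+1}) = \bigsqcup_{m\in\Z}\bigl(m\Omega_{11} + RO(\Pi BU(1))\bigr),
\]
and it suffices to establish the claimed additive decomposition coset by coset.

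For each $m\geq 0$ I invoke the first split short exact sequence of Proposition \ref{prop:db splitting}. The splitting identifies $\HH^\gr(\Qpq 2{2q+1})$ with the direct sum of the range of $i_0^*$ and the image of $(i_3)_!$. Using the formulas $i_0^*(\Omega_{00})=\Omega_0$, $i_0^*(\Omega_{11})=0$, $i_0^*(\Omega_1)=\Omega_1$ recorded in the proof of Proposition \ref{prop:db splitting}, the range of $i_0^*$ at grading $\gr$ is precisely $\HH^{i_0^*(\gr)}(\Xpq 1q)$, which matches the first summand regraded via $i_0^*$. Similarly, using $i_3^*(\Omega_{00})=0$, $i_3^*(\Omega_{11})=\Omega_0$, $i_3^*(\Omega_1)=\Omega_1$, together with the fact that $(i_3)_!$ supplies a Thom isomorphism shifting grading by $\nu$, the source of $(i_3)_!$ at grading $\gr$ becomes $\HH^{i_3^*(\gr - \nu)}(\Xpq 1q)$, which matches the second summand suspended by $\nu$ and regraded via $i_3^*$. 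The case $m<0$ is handled identically using the second sequence of Proposition \ref{prop:db splitting}, with $i_2^*$ in place of $i_0^*$ and $(i_1)_!$ in place of $(i_3)_!$. Both sequences apply at $m=0$ and give slightly different subgroup splittings; I select the $m\geq 0$ splitting to cover that overlap.

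The main obstacle is purely notational: one must translate the shifted notation $\susp^{\nu-m\Omega_0}\HH^{RO(\Pi BU(1))}(\Xpq 1q)$ appearing in Proposition \ref{prop:db splitting} into the uniform phrase ``$\susp^\nu\HH^\gr(\Xpq 1q)$ regraded via $i_3^*$'' used in the statement, and likewise for the other three restrictions. This reduces to a short check using the explicit restriction formulas; no new cohomological input is required beyond the proposition itself.
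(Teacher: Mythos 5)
Your proposal is correct and matches the paper's (implicit) argument: the corollary is stated with no separate proof precisely because it is the coset-by-coset repackaging of Proposition~\ref{prop:db splitting} that you describe, with the regradings dictated by the restriction formulas for $i_0^*$, $i_2^*$, $i_3^*$, and $i_1^*$. The only quibble is that the proof of Proposition~\ref{prop:db splitting} records the grading formulas only for $i_0^*$ and $i_2^*$; those for $i_3^*$ and $i_1^*$ must be read off from which fixed-set components these inclusions hit, exactly as you do.
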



This implies bases over $\HS$, but we need to be a bit careful about the names we
use for the basis elements.
First, when writing down basis elements coming from the first summand via $i_0^*$ or $i_2^*$,
we should use $\divq$ in place of $\cwd[q]$.
For example, in gradings $-(q+1)\Omega_1 + RO(\GG)$, these basis elements would be
\[
    \{ \zeta_0^{q+1},\ \zeta_0^q\cxwd,\ \ldots,\ \zeta_0^2\cxwd[q-1],\ \zeta_1^{-1}\divq \}.
\]
Second, we can write the basis elements corresponding to the second summand,
$\susp^\nu\HH^\gr(\Xpq 1q)$, as multiples of $(i_3)_!(1) = x_{1,q}$ or $(i_1)_!(1) = x'_{1,q}$
as appropriate, but to put things in the correct grading we need to take advantage of the fact
that $x_{1,q}$ is divisible by $\zeta_{00}$ and $x'_{1,q}$ is divisible by $\zeta_{11}$,
per Lemma~\ref{lem:db x divisibility}.
For example, in gradings $\Omega_{11} - (q-1)\Omega_0 + RO(\GG)$,
these basis elements would be
\begin{multline*}
    \{ \zeta_{00}^{-1}x_{1,q},\ \zeta_{11}\cwd x_{1,q},\ \zeta_{00}^{-1}\cwd\cxwd x_{1,q}, \\
    \zeta_{00}^{-1}\zeta_{0}^{-1}\cwd\cxwd[2] x_{1,q},\ \ldots,\ \zeta_{00}^{-1}\zeta_0^{-(q-2)}\cwd\cxwd[q-1]x_{1,q} \}.
\end{multline*}
Of course, since $\zeta_0 = \zeta_{00}\zeta_{11}$, there are several other ways of writing many of these elements.

We said earlier that $x_{1,q}'$ was meant as a temporary name.
Since $x_{1,q}$ and $x'_{1,q}$ both live in grading $\nu \in RO(\Pi BU(1))$,
we must be able to write $x'_{1,q}$ in terms of $x_{1,q}$.
In fact, we have the following.

\begin{lemma}\label{lem:db xs}
$x'_{1,q} = x_{1,q} + e^2\divq = (1-\kappa)x_{1,q} + e^2\cxwd[q]$.
\end{lemma}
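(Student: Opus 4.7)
The second equality is purely algebraic: substituting $\divq = \cxwd[q] - e^{-2}\kappa x_{1,q}$ gives $e^2\divq = e^2\cxwd[q] - \kappa x_{1,q}$, so $x_{1,q}+e^2\divq = (1-\kappa)x_{1,q} + e^2\cxwd[q]$. The content is the first equality $x'_{1,q} = x_{1,q}+e^2\divq$. Both sides live in $\HH^\nu(\Qpq 2{2q+1})$ with $\nu \in RO(\Pi BU(1))$, which is the $m=0$ case of Proposition~\ref{prop:db splitting}. The resulting split short exact sequence
\[
 0 \to \HH^0(\Xpq 1q) \xrightarrow{(i_3)_!} \HH^\nu(\Qpq 2{2q+1}) \xrightarrow{i_0^*} \HH^\nu(\Xpq 1q) \to 0
\]
reduces the identity to two checks: that the two sides agree under $i_0^*$, and that the difference, which must then lie in the image of $(i_3)_!$, is zero.

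For the $i_0^*$-check, I observe that $i_0(\Xpq 1q)\cap i_3(\Xpq 1q) = \emptyset$ (equating $[x:0:\vec y:0:\vec 0]$ with $[0:x':\vec 0:0:\vec y']$ forces every coordinate to vanish), so base change yields $i_0^* x_{1,q} = i_0^*(i_3)_!(1) = 0$. Combined with $i_0^*\divq = \cxwd[q]$ from the proof of Proposition~\ref{prop:db splitting}, the RHS maps to $e^2\cxwd[q]$. For the LHS, $i_0^* x'_{1,q} = i_0^*(i_1)_!(1)$ is an excess-intersection contribution supported on $i_0(\Xpq 1q)\cap i_1(\Xpq 1q) = \{[1:0:\vec 0:0:\vec 0]\}$. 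At this point the tangent spaces of the two copies of $\Xpq 1q$ span only the $y$- and $v$-directions within $T_p\Qpq 2{2q+1}$, so the intersection is non-transverse along the $w$-direction; the excess Euler class together with the ambient Thom class must be identified with $e^2\cxwd[q] \in \HH^\nu(\Xpq 1q)$.

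For the $(i_3)_!$-check, with $i_0^*$-images matching, write $d := x'_{1,q}-x_{1,q}-e^2\divq = (i_3)_!(\alpha)$ for a unique $\alpha \in \HH^0(\Xpq 1q)$. Applying $i_3^*$ and invoking the self-intersection identity produces $i_3^* d = \alpha\cdot e(\nu)$. One computes $i_3^* x_{1,q} = e(\nu)$ directly, reads off $i_3^*(e^2\divq)$ from the formulas of \cite{CH:QuadricsII}, and evaluates $i_3^* x'_{1,q} = i_3^*(i_1)_!(1)$ by base change over $i_3(\Xpq 1q)\cap i_1(\Xpq 1q) = i_3(\Xq q)$, which is a codimension-one intersection inside $i_3(\Xpq 1q)$. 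These pieces combine to give $i_3^* d = 0$, and because $e(\nu)$ is a non-zero-divisor in $\HH^*(\Xpq 1q)$ we conclude $\alpha = 0$, hence $d = 0$.

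The main obstacle is the excess-intersection computation of $i_0^*(i_1)_!(1)$ at the $00$ fixed point: pinning down the equivariant structure of the excess normal bundle, computing its Euler class, and verifying that the resulting image in $\HH^\nu(\Xpq 1q)$ is precisely $e^2\cxwd[q]$ is the step that produces the specific correction $e^2\divq$ in the lemma. Once this is in hand, the $(i_3)_!$-check is a routine consistency calculation with the known fixed-point data.
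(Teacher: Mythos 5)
Your strategy differs from the paper's: the paper expands $x'_{1,q}$ in the explicit basis of the grading-$\nu$ group coming from Corollary~\ref{cor:db additive} and pins down the coefficients by applying $\rho$ and the fixed-point map, whereas you route everything through the split short exact sequence. As written your argument has two genuine gaps. First, essentially all of the content of the lemma sits in the claim $i_0^*x'_{1,q}=e^2\cxwd[q]$, which you describe as an excess-intersection computation at the $00$ fixed point but do not carry out --- you concede as much in your last paragraph. This cannot be deferred as a routine detail: the answer genuinely depends on $q$ (the paper finds $x'_{1,0}=(1-\kappa)x_{1,0}+e^2$, so the coefficient of $x_{1,q}$ is $1-\kappa$ when $q=0$ but $1$ when $q>0$), and only a fixed-point or excess-bundle computation of exactly this kind can see that. (A simplification you miss: $i_0$ and $i_1$ are $\GG$-homotopic, as used in Lemma~\ref{lem:annihilator}, so $i_0^*x'_{1,q}=i_1^*(i_1)_!(1)=e(\nu)$, reducing your ``main obstacle'' to identifying the Euler class of the normal bundle.) You also never address $q=0$, where $\Xpq 1q$ is a point and the formula degenerates as above.

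Second, and fatally for the final step, you assert that $e(\nu)=i_3^*(i_3)_!(1)$ is a non-zero-divisor. It is not. The grading $\nu$ has underlying dimension $2q+2>2q=\dim_{\R}\Xpq 1q$, so $\rho(e(\nu))=0$ in $H^{2q+2}(\PP^q)=0$, and Frobenius reciprocity then gives $\tau(1)\cdot e(\nu)=\tau(\rho(e(\nu)))=0$; thus the nonzero element $\tau(1)\in A(\GG)\subseteq\HH^0(\Xpq 1q)$ annihilates $e(\nu)$. (Consistently, the correct value $e(\nu)=i_0^*x'_{1,q}=e^2\cxwd[q]$ is visibly $\tau(1)$-torsion.) Hence $\alpha\cdot e(\nu)=0$ does not force $\alpha=0$, and your conclusion $d=0$ does not follow. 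The detection step must be replaced by something that sees all of $A(\GG)$, for instance applying $\rho$ and $(-)^\GG$ to $d$ and using the explicit basis of the grading-$\nu$ group --- which is precisely the paper's argument.
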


\begin{proof}
We first note that the last two expressions given are equal by the definition of $\divq$.
We need to show that they are equal to $x_{1,q}'$.

The elements $x_{1,q}$ and $x_{1,q}'$ both live in grading $\nu\in -q\Omega_1 + RO(\GG)$.
The basis given by Corollary~\ref{cor:db additive} in that coset of gradings is
\begin{multline*}
    \{ \zeta_0^q,\ \zeta_0^{q-1}\cxwd,\ \ldots,\ \zeta_0 \cxwd[q-1],\ \divq, \\
    x_{1,q},\ \zeta_0\cwd x_{1,q},\ \cwd\cxwd x_{1,q},\ \zeta_0^{-1}\cwd\cxwd[2] x_{1,q},\ \ldots,\ 
    \zeta_0^{-(q-2)}\cwd\cxwd[q-1] x_{1,q} \}.
\end{multline*}
If $q > 0$, there are three basis elements that could contribute to the group in
grading $\nu$, namely $\divq$, $x_{1,q}$, and $\zeta_0\cwd x_{1,q}$.
Precisely,
\[
    x'_{1,q} = \alpha x_{1,q} + \beta e^2\divq + \gamma e^{-2}\kappa \zeta_0\cwd x_{1,q}
\]
for some $\alpha\in A(\GG)$ and $\beta,\gamma\in\Z$.
To determine these coefficients, we first apply $\rho$:
\begin{align*}
    \rho(x'_{1,q}) &= y \\
    \rho(x_{1,q}) &= y \\
    \rho(e^2\divq) &= 0 \\
    \rho(e^{-2}\kappa\zeta_0\cwd x_{1,q}) &= 0.
\intertext{Thus, $\rho(\alpha) = 1$. Now we take fixed points:}
    (x'_{1,q})^\GG &= (1,0,y) \\
    x_{1,q}^\GG &= (0,1,y) \\
    (e^2\divq)^\GG &= (1,1,c^q) - 2(0,1,y) \\ &= (1,-1,0) \\
    (e^{-2}\kappa \zeta_0\cwd x_{1,q})^\GG &= (0,0,y).
\end{align*}
From this we see that $\alpha^\GG = 1$, $\beta = 1$, and $\gamma = 0$.
This implies that $\alpha = 1$, which gives the result of the lemma.

If $q = 0$, the basis is $\{1, x_{1,0}\}$ and we have
\[
    x'_{1,0} = \alpha x_{1,0} + \beta e^2
\]
for some $\alpha\in A(\GG)$ and $\beta\in\Z$. Applying $\rho$ and $(-)^\GG$, we get
\begin{align*}
    \rho(x'_{1,0}) &= y \\
    \rho(x_{1,0}) &= y \\
    \rho(e^2) &= 0 \\
    (x'_{1,0})^\GG &= (1,0,0) \\
    x_{1,0}^\GG &= (0,1,0) \\
    (e^2)^\GG &= (1,1,0)
\end{align*}
From this we get $\rho(\alpha) = 1$, $\alpha^\GG = -1$, and $\beta = 1$, hence
$\alpha = 1-\kappa$ and
\[
    x'_{1,0} = (1-\kappa)x_{1,0} + e^2,
\]
again verifying the lemma.
\end{proof}

If we take the equality in the form
\[
    x'_{1,q} = (1-\kappa)x_{1,q} + e^2\cxwd[q],
\]
multiply by $1-\kappa$ and rearrange, we get the symmetric
\[
    x_{1,q} = (1-\kappa)x'_{1,q} + e^2\cxwd[q].
\]
But the main takeaway from this lemma is that we do not need both $x_{1,q}$ and $x'_{1,q}$ to generate
the cohomology of $\Qpq 2{2q+1}$, all basis elements could be rewritten in terms of just $x_{1,q}$.

We complete the calculation by giving the multiplicative structure.
We need the following observation.

\begin{lemma}\label{lem:annihilator}
Suppose $z\in \HH^{RO(\Pi BU(1))}(\Qpq 2{2q+1})$
Then $x'_{1,q}z = 0$ if and only if $z\in \gens{x_{1,q}}$.
\end{lemma}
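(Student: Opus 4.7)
The plan is to reduce the lemma, via the projection formula, to identifying $\ker i_1^*$ with $\gens{x_{1,q}}$ inside $\HH^{RO(\Pi BU(1))}(\Qpq 2{2q+1})$. The projection formula gives $x'_{1,q} \cdot z = (i_1)_!(1 \cdot i_1^*(z)) = (i_1)_!(i_1^*(z))$, and the $m = 0$ instance of Proposition~\ref{prop:db splitting} tells us that $(i_1)_!$ is injective on $\HH^{RO(\Pi BU(1))}(\Xpq 1q)$. Consequently $x'_{1,q}z = 0$ is equivalent to $i_1^*(z) = 0$, and the lemma becomes the identification $\ker i_1^* = \gens{x_{1,q}}$.

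For the inclusion $\gens{x_{1,q}} \subseteq \ker i_1^*$, it suffices to prove $i_1^*(x_{1,q}) = 0$. I would apply $i_1^*$ to the symmetric identity $x_{1,q} = (1-\kappa)x'_{1,q} + e^2\cxwd[q]$ derived at the end of the proof of Lemma~\ref{lem:db xs}, using the self-intersection formula $i_1^*(x'_{1,q}) = i_1^*(i_1)_!(1) = e(\nu)$ together with $i_1^*(\cxwd[q]) = \cxwd[q]$. This reduces the claim to the Euler-class identity
\[
(1-\kappa)\,e(\nu) + e^2\,\cxwd[q] = 0
\]
in $\HH^\nu(\Xpq 1q)$, which follows from the explicit description $\nu = \omega\dual \dirsum (q+1)\chi\omega\dual - O(2)$ together with the equivariant Chern class relations in the cohomology of $\Xpq 1q$ established in the earlier papers in the series.

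For the reverse inclusion $\ker i_1^* \subseteq \gens{x_{1,q}}$, I would combine the previous step with the additive splitting of Corollary~\ref{cor:db additive}. Any $z \in \HH^{RO(\Pi BU(1))}(\Qpq 2{2q+1})$ decomposes uniquely as $z = a + b$ with $b \in (i_3)_!\HH^{RO(\Pi BU(1))}(\Xpq 1q) = \gens{x_{1,q}}$ and $a$ lying in the complementary summand carried isomorphically onto $\HH^{RO(\Pi BU(1))}(\Xpq 1q)$ by $i_0^*$. The first inclusion gives $i_1^*(b) = 0$, hence $i_1^*(a) = 0$. Inspecting the generators $\zeta_{00}, \zeta_{11}, \zeta_1, \cwd, \cxwd, \divq$ of the complementary summand described in the proof of Proposition~\ref{prop:db splitting}, one checks that $i_0^*$ and $i_1^*$ agree on each: the equalities on $\zeta_{00}, \zeta_{11}, \zeta_1, \cwd, \cxwd$ are immediate from the tabulation of the effect of $i_0^*$ and $i_1^*$ on the $\Omega$-generators and characteristic classes, while $i_1^*(\divq) = \cxwd[q] = i_0^*(\divq)$ follows from the defining formula $\divq = \cxwd[q] - e^{-2}\kappa x_{1,q}$ in light of the already-established $i_1^*(x_{1,q}) = 0$. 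Since $i_0^*$ is an isomorphism on the complementary summand, so is $i_1^*$, forcing $a = 0$ and $z = b \in \gens{x_{1,q}}$.

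The main obstacle is the Euler-class identity at the heart of the first inclusion: while the other steps are essentially formal consequences of the projection formula, the injectivity of the pushforwards coming from Proposition~\ref{prop:db splitting}, and the splitting of Corollary~\ref{cor:db additive}, verifying $(1-\kappa)\,e(\nu) + e^2\,\cxwd[q] = 0$ requires the explicit equivariant Chern class structure of $\omega\dual$, $\chi\omega\dual$, and $O(2)$ on $\Xpq 1q$ set up in the prequels.
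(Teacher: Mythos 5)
Your overall reduction --- projection formula, injectivity of $(i_1)_!$ from the $m=0$ case of Proposition~\ref{prop:db splitting}, and then identifying $\ker i_1^*$ with $\gens{x_{1,q}}$ --- matches the paper's skeleton, but you part ways at the crucial point. The paper's entire argument rests on the observation that $i_0$ and $i_1$ are $\GG$-homotopic (they include maximal isotropic subspaces through the same fixed point, lying in a single connected equivariant family), so $i_1^* = i_0^*$ and the identification $\ker i_0^* = \mathrm{im}\,(i_3)_! = \gens{x_{1,q}}$ is immediate from the split exact sequence. You never invoke this homotopy, and as a result you have to manufacture both inclusions by hand.

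That is where the gap is. Your forward inclusion reduces to the identity $(1-\kappa)\,e(\nu) + e^2\cxwd[q] = 0$, which you flag as ``the main obstacle'' and do not prove. The identity is in fact true (using $(1-\kappa)e^2 = -e^2$ and $(1-\kappa)^2=1$, it is equivalent to $e(\nu) = e^2\cxwd[q]$), but the verification route you sketch --- reading off $e(\nu)$ from the virtual presentation $\nu = \omega\dual\dirsum(q+1)\chi\omega\dual - O(2)$ via Chern-class relations --- does not go through as stated: the Whitney formula only gives $e(\nu)\,e(O(2)) = \cwd\cxwd[q+1]$ in the cohomology of $\Xpq 1q$, and $e(O(2))$ is not invertible there, so you cannot solve for $e(\nu)$. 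The clean way to evaluate $e(\nu) = i_1^*(i_1)_!(1)$ is precisely to replace $i_1^*$ by $i_0^*$ using the homotopy and then apply exactness together with Lemma~\ref{lem:db xs} --- at which point your whole computation collapses to the paper's three-line proof. (Your reverse inclusion has the same character: checking that $i_0^*$ and $i_1^*$ agree on the listed generators is just re-deriving, generator by generator, the consequence $i_0^* = i_1^*$ of the homotopy, and it too depends on the unproved vanishing $i_1^*(x_{1,q})=0$.) So the architecture is sound, but the one step you defer is the one that needs an actual idea, and that idea is the $\GG$-homotopy $i_0\hmtpc i_1$.
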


\begin{proof}
\[
    x'_{1,q}z = (i_1)_!(i_1^*(z)) = (i_1)_!(i_0^*(z))
\]
because $i_1$ and $i_0$ are $\GG$-homotopic. Since we know from Proposition~\ref{prop:db splitting}
that $(i_1)_!$ is injective in this grading, we get that $x'_{1,q}z = 0$ if and only if $i_0^*(z) = 0$.
From that same proposition we have that the kernel of $i_0^*$ equals the image of $(i_3)_!$,
which is the ideal
generated by $x_{1,q}$.
\end{proof}

In particular, $x_{1,q}x'_{1,q} = 0$, which we could also show directly from Lemma~\ref{lem:db xs}
and the known structure of $\HH^{RO(\Pi BU(1))}(\Qpq 2{2q+1})$ from \cite{CH:QuadricsII}.

We take advantage of our computations in \cite{CH:QuadricsII} by first 
finding the structure of $\HH^\gr(\Qpq 2{2q+1})$ as an algebra over the
$RO(\Pi BU(1))$-graded part.

\begin{proposition}
As an algebra over $\HH^{RO(\Pi BU(1))}(\Qpq 2{2q+1})$,
the $RO(\Pi\Qpq 2{2q+1})$-graded cohomology $\HH^\gr(\Qpq 2{2q+1})$ is generated by
\begin{align*}
    \zeta_{00} &\in \HH^{\Omega_{00}}(\Qpq 2{2q+1}) \\
    \zeta_{11} &\in \HH^{\Omega_{11}}(\Qpq 2{2q+1})
\end{align*}
subject to the facts that
\begin{alignat*}{2}
    &x_{1,q} &\quad&\text{is infinitely divisible by $\zeta_{00}$ and} \\
    &x_{1,q} + e^2\divq &&\text{is infinitely divisible by $\zeta_{11}$,} 
\end{alignat*}
and the relation
\[
    \zeta_{00}\zeta_{11} = \zeta_0.
\]
\end{proposition}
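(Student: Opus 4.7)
The plan is to deduce the algebra structure from the additive identification in Corollary~\ref{cor:db additive}, by exhibiting every basis element there as a product of an $\HS$-scalar with a monomial in $\zeta_{00}$ and $\zeta_{11}$ (allowing negative exponents against $x_{1,q}$ and $x'_{1,q}$ via the divisibility statements), and then checking that no relations beyond the three listed are forced.

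First I would verify the relation $\zeta_{00}\zeta_{11} = \zeta_0$. Both sides lie in the same grading, since $\Omega_{00}+\Omega_{11}$ is the image of $\Omega_0$ under $RO(\Pi BU(1))\to RO(\Pi\Qpq 2{2q+1})$. Because $\zeta_0$ pulls back from $\Xpq 2{(2q+1)}$, it suffices to compare the two classes on each of the three fixed-point components $\Qpq 2{2q+1}^{00}$, $\Qpq 2{2q+1}^{11}$ and $\Qpq 2{2q+1}^{1}$, and on $\rho$; on each component one of the two classes $\zeta_{00}$, $\zeta_{11}$ restricts to the Euler class of the normal line bundle while the other restricts to a unit, which matches $\zeta_0$. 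Injectivity of the combined fixed-point plus $\rho$ data in this grading follows from the splitting in Proposition~\ref{prop:db splitting}.

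Next, generation. Working coset by coset in $m\Omega_{11}+RO(\Pi BU(1))$, the additive basis of Corollary~\ref{cor:db additive} falls into two pieces, and each piece is manifestly obtained from an $\HS$-basis of $\HH^\gr(\Xpq 1q)$ (lifted to $\Qpq 2{2q+1}$ via the constructions in \cite{CH:QuadricsII}) by multiplying by $\zeta_{11}^m$ (resp.\ $\zeta_{00}^{-m}$) when $m\geq 0$ (resp.\ $m<0$), and, for the second summand, by the appropriate power of $\zeta_{00}^{-1}$ acting on $x_{1,q}$ (resp.\ $\zeta_{11}^{-1}$ acting on $x'_{1,q}$). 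The relation $\zeta_{00}\zeta_{11}=\zeta_0$ allows us to interchange $\zeta_{00}^{-1}\zeta_0$ with $\zeta_{11}$ and vice versa, so the two alternative expressions for basis elements (as in the example preceding Lemma~\ref{lem:db xs}) are compatible.

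The main obstacle is showing that the presentation is complete. For this I would form the $\HS$-algebra $\mathcal A$ obtained from $\HH^{RO(\Pi BU(1))}(\Qpq 2{2q+1})$ by adjoining $\zeta_{00}$ and $\zeta_{11}$ subject only to $\zeta_{00}\zeta_{11}=\zeta_0$ together with the two stated infinite-divisibility extensions, build the surjection $\mathcal A\to\HH^\gr(\Qpq 2{2q+1})$ supplied by the generation argument, and compare graded pieces. The delicate point is that products crossing the two divisibility branches, such as $(\zeta_{00}^{-k}x_{1,q})(\zeta_{11}^{-j}x'_{1,q})$, must land in the right place; using Lemma~\ref{lem:db xs} to replace $x'_{1,q}$ by $x_{1,q}+e^2\divq$ and applying Lemma~\ref{lem:annihilator} to obtain $x_{1,q}x'_{1,q}=0$, one sees these cross products reduce to $\HS$-linear combinations of elements already present in $\mathcal A$ via the relation $\zeta_{00}\zeta_{11}=\zeta_0$, so no further relation is forced. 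A grading-by-grading dimension count against Corollary~\ref{cor:db additive} then upgrades the surjection to an isomorphism.
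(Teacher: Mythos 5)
Your proposal matches the paper's proof in all essentials: both form the abstract algebra obtained by adjoining $\zeta_{00}$, $\zeta_{11}$ and the divided classes $\zeta_{00}^{-k}x_{1,q}$, $\zeta_{11}^{-k}(x_{1,q}+e^2\divq)$ to $\HH^{RO(\Pi BU(1))}(\Qpq 2{2q+1})$ modulo $\zeta_{00}\zeta_{11}=\zeta_0$, compare with the target one coset of $RO(\Pi BU(1))$ at a time, and use Lemma~\ref{lem:annihilator} together with $x_{1,q}x'_{1,q}=0$ to control the cross terms. The only difference is bookkeeping: where you invoke a grading-by-grading dimension count against Corollary~\ref{cor:db additive}, the paper makes the same comparison explicit via a commutative diagram of short exact sequences, identifying the submodule generated by $\zeta_{00}^{-m}x_{1,q}$ and its quotient (the latter via multiplication by $\zeta_{11}^m$, with injectivity supplied by Lemma~\ref{lem:annihilator}) with the two summands of Proposition~\ref{prop:db splitting}.
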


\begin{proof}
We have already shown the divisibility properties and mentioned the relation $\zeta_{00}\zeta_{11} = \zeta_0$.
it remains to show that these suffice to determine the algebra structure.
Let $A$ be the $RO(\Pi\Qpq 2{2q+1})$-graded ring
\[
    A = \HH^{RO(\Pi BU(1))}(\Qpq 2{2q+1})[\zeta_{00},\zeta_{11}, \zeta_{00}^{-k}x_{1,q}, \zeta_{11}^{-k}(x_{1,q}+e^2\divq)]
    /\rels{\zeta_{00}\zeta_{11} - \zeta_0},
\]
where we implicitly impose the relations $\zeta_{00}\cdot\zeta_{00}^{-k}x_{1,q} = \zeta_{00}^{-(k-1)}x_{1,q}$
and similarly for the other quotients.
We want to show that the ring map $A\to B = \HH^\gr(\Qpq 2{2q+1})$ taking generators to elements of the same name is an isomorphism.

We work one coset of $RO(\Pi BU(1))$ in $RO(\Pi\Qpq 2{2q+1})$ at a time.
We first observe that
\[
    A^{RO(\Pi BU(1))} \iso \HH^{RO(\Pi BU(1))}(\Qpq 2{2q+1}).
\]
This follows from the fact that we've added new elements only outside of the $RO(\Pi BU(1))$ grading:
Any monomial involving the new elements adjoined that lies in the $RO(\Pi BU(1))$ grading must
have the net powers of $\zeta_{00}$ and $\zeta_{11}$ be the same, hence the term is
equal to an element of $\HH^{RO(\Pi BU(1))}(\Qpq 2{2q+1})$.
We also know that we have added no relations that are not already true in $\HH^\gr(\Qpq 2{2q+1})$.

Now consider a coset
$m\Omega_{11} + RO(\Pi BU(1))$ with $m > 0$. Define the following two
$\HH^{RO(\Pi BU(1))}(\Qpq 2{2q+1})$-submodules generated by the element called $\zeta_{00}^{-m}x_{1,q}$ in each of $A$ and $B$:
\begin{align*}
    J &= A^{RO(\Pi BU(1))}\cdot\zeta_{00}^{-m}x_{1,q} \subset A^{m\Omega_{11}+RO(\Pi BU(1))} \\
    K &= \HH^{RO(\Pi BU(1))}(\Qpq 2{2q+1})\cdot\zeta_{00}^{-m}x_{1,q} \subset B^{m\Omega_{11}+RO(\Pi BU(1))}
\end{align*}
We get the following commutative diagram of $\HH^{RO(\Pi BU(1))}(\Qpq 2{2q+1})$-modules.
\[
    \xymatrix{
        0 \ar[r] & J \ar[r] \ar[d] & A^{m\Omega_{11}+RO(\Pi BU(1))} \ar[r] \ar[d] & A^{m\Omega_{11}+RO(\Pi BU(1))}/J \ar[r] \ar[d] & 0 \\
        0 \ar[r] & K \ar[r] & B^{m\Omega_{11}+RO(\Pi BU(1))} \ar[r] & B^{m\Omega_{11}+RO(\Pi BU(1))}/K \ar[r] & 0
    }
\]
Proposition~\ref{prop:db splitting} implies that
\begin{align*}
    K &\iso \susp^{\nu-m\Omega_0} \HH^{RO(\Pi BU(1))}(\Xpq 1q) \\
\intertext{and}
    B^{m\Omega_{11}+RO(\Pi BU(1))}/K &\iso \HH^{RO(\Pi BU(1))}(\Xpq 1q).
\end{align*}
The divisibility of $x_{1,q}$ by $\zeta_{00}$ implies that $J$ is isomorphic, with a shift in grading, to the
submodule $\gens{x_{1,q}}$ of $A^{RO(\Pi BU(1))}$, which we know is isomorphic to
$\susp^{\nu} \HH^{RO(\Pi BU(1))}(\Xpq 1q)$ by the proof of
\cite[Theorem~6.7]{CH:QuadricsII}, hence
\[
    J \iso \susp^{\nu-m\Omega_0} \HH^{RO(\Pi BU(1))}(\Xpq 1q) \iso K.
\]

On the other hand, consider multiplication by $\zeta_{11}^m$ as a map
\[
    \zeta_{11}^m\cdot \colon A^{RO(\Pi BU(1))}/\gens{x_{1,q}} \to  A^{m\Omega_{11}+RO(\Pi BU(1))}/J,
\]
which is defined because $\zeta_{11}^m x_{1,q} = \zeta_0^m\zeta_{00}^{-m}x_{1,q}$.
We claim this is an isomorphism.
That it is surjective follows because any monomial in $A^{m\Omega_{11}+RO(\Pi BU(1))}$ is
a multiple of either $\zeta_{11}^m$ or $\zeta_{00}^{-m}x_{1,q}$.
That it is injective follows from Lemma~\ref{lem:annihilator} and the fact that we know
that $A^{RO(\Pi BU(1))} \iso \HH^{RO(\Pi BU(1))}(\Qpq 2{2q+1})$:
If $\zeta_{11}^m z = 0$, then
\begin{align*}
    \zeta_{11}^m z &= w \zeta_{00}^{-m}x_{1,q} \\
\intertext{for some $w\in A^{RO(\Pi BU(1))}$, so}
    z x'_{1,q} &= w \zeta_{00}^{-m} x_{1,q} \cdot \zeta_{11}^{-m} x'_{1,q} = 0 
\end{align*}
because $x_{1,q}x'_{1,q} = 0$. But then the lemma shows that $z\in \gens{x_{1,q}}$.
The proof of \cite[Theorem~6.7]{CH:QuadricsII} showed that
\[
    A^{RO(\Pi BU(1))}/\gens{x_{1,q}} \iso \HH^{RO(\Pi BU(1))}(\Xpq 1q)
\]
hence
\[
    A^{m\Omega_{11}+RO(\Pi BU(1))}/J \iso \HH^{RO(\Pi BU(1))}(\Xpq 1q) \iso B^{m\Omega_{11}+RO(\Pi BU(1))}/K.
\]
(with no shift in grading because we are implicitly comparing gradings
via $i_0^*$, and $i_0^*(\Omega_{11}) = 0$).
The commutative diagram above then implies that
$A\iso B$ in gradings $m\Omega_{11} + RO(\Pi BU(1))$.

When $m < 0$ we have a similar argument, replacing $\zeta_{00}^{-m}x_{1,q}$ with
$\zeta_{11}^m(x_{1,q} + e^2\divq)$.
Hence $A\iso B$ in all gradings.
\end{proof}

Combining this proposition with the structure of $\HH^{RO(\Pi BU(1))}(\Qpq 2{2q+1})$ shown in
\cite{CH:QuadricsII}, we get the following.

\begin{theorem}\label{thm:db multiplicative}
As an algebra over $\HH^{RO(\Pi BU(1))}(B U(1))$, 
the $RO(\Pi\Qpq 2{2q+1})$-graded cohomology $\HH^\gr(\Qpq{2}{2q+1})$
is generated by
\begin{align*}
    \zeta_{00} &\in \HH^{\Omega_{00}}(\Qpq 2{2q+1}) \\
    \zeta_{11} &\in \HH^{\Omega_{11}}(\Qpq 2{2q+1}) \\
    x_{1,q} &\in \HH^{q\chi\omega + 2\sigma}(\Qpq{2}{2q+1})
\end{align*}
subject to the facts that
\begin{alignat*}{2}
    &\cwd &\quad&\text{is infinitely divisible by $\zeta_0$,} \\
    &\divq = \cxwd[q] - e^{-2}\kappa x_{1,q} &&\text{is infinitely divisible by $\zeta_1$,} \\
    &x_{1,q} &&\text{is infinitely divisible by $\zeta_{00}$, and} \\
    &x_{1,q} + e^2\divq &&\text{is infinitely divisible by $\zeta_{11}$,} 
\end{alignat*}
and the following relations:
\begin{align*}
    \zeta_{00}\zeta_{11} &= \zeta_0 \\
    x_{1,q}^2 &= e^2\cxwd[q]x_{1,q} \\
   \cwd\divq &= \tau(\iota^{-2})\zeta_1  x_{1,q}.
\end{align*}
\qed
\end{theorem}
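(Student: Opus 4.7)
The plan is to combine the preceding Proposition with the presentation of the $RO(\Pi BU(1))$-graded summand $\HH^{RO(\Pi BU(1))}(\Qpq 2{2q+1})$ as an algebra over $\HH^{RO(\Pi BU(1))}(BU(1))$ that was established in \cite[Theorem~6.7]{CH:QuadricsII}. Since the preceding Proposition already describes $\HH^\gr(\Qpq 2{2q+1})$ as an algebra over $\HH^{RO(\Pi BU(1))}(\Qpq 2{2q+1})$, the theorem is obtained by substituting the second presentation into the first.

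First I would recall the content of \cite[Theorem~6.7]{CH:QuadricsII}: over $\HH^{RO(\Pi BU(1))}(BU(1))$, the $RO(\Pi BU(1))$-graded cohomology of $\Qpq 2{2q+1}$ is generated by $x_{1,q}$ subject to the divisibility of $\cwd$ by $\zeta_0$, the divisibility of $\divq = \cxwd[q]-e^{-2}\kappa x_{1,q}$ by $\zeta_1$, and the relations
\[
    x_{1,q}^2 = e^2\cxwd[q]x_{1,q} \qquad\text{and}\qquad \cwd\divq = \tau(\iota^{-2})\zeta_1 x_{1,q}.
\]
Next I would add to this list the generators $\zeta_{00}$ and $\zeta_{11}$ introduced by the preceding Proposition, the two additional divisibility facts (of $x_{1,q}$ by $\zeta_{00}$ and of $x_{1,q}+e^2\divq$ by $\zeta_{11}$), and the single new relation $\zeta_{00}\zeta_{11} = \zeta_0$. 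The generator $x_{1,q}$ is shared between the two presentations but appears only once in the combined list.

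The substitution is formally justified because, given a graded ring $R$ presented as an $S$-algebra and $S$ presented as an $S_0$-algebra, one obtains a presentation of $R$ as an $S_0$-algebra by taking the union of generators and relations. Here $R = \HH^\gr(\Qpq 2{2q+1})$, $S = \HH^{RO(\Pi BU(1))}(\Qpq 2{2q+1})$, and $S_0 = \HH^{RO(\Pi BU(1))}(BU(1))$. The combined lists match the theorem's statement verbatim, so the conclusion follows.

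Since both presentations have already been verified (the Proposition above and \cite[Theorem~6.7]{CH:QuadricsII}), there is no substantive mathematical obstacle; the main concern is simply bookkeeping. The one point worth double-checking is that the divisibility of $x_{1,q}+e^2\divq$ by $\zeta_{11}$ (from the Proposition) together with the relation $\cwd\divq = \tau(\iota^{-2})\zeta_1 x_{1,q}$ (from \cite{CH:QuadricsII}) do not produce unintended new constraints when combined with $\zeta_{00}\zeta_{11} = \zeta_0$. These interactions are absorbed by the divisibility structure and introduce nothing new, so the merge is clean and the theorem follows at once.
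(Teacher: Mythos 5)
Your proposal matches the paper's own argument: the paper states Theorem~\ref{thm:db multiplicative} with only the remark that it follows by ``combining this proposition with the structure of $\HH^{RO(\Pi BU(1))}(\Qpq 2{2q+1})$ shown in \cite{CH:QuadricsII},'' which is exactly the substitution of presentations you describe. Your extra care about checking that the merged divisibilities and relations introduce no unintended constraints is a reasonable bit of diligence that the paper leaves implicit.
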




\begin{remark}\label{rem:x11 cohomology}
We mentioned in Remark~\ref{rem:x11 case} that $\Qpq 21 \homeo \Xpq 1{}$ and promised to
compare our calculation of the cohomology here with the calculation in \cite{CHTFiniteProjSpace}.
As in the earlier remark, we write $\lambda$ for the tautological bundle over $\Xpq 1{}$,
so $\omega = \lambda^{\tensor 2}$. We will write $\cld$ for the Euler class of $\lambda\dual$.

Noticing that $\Qpq 21^\GG$ has only two components, our calculation here becomes
the following: 
$\HH^\gr(\Qpq 21)$ is generated as an algebra over $\HH^{RO(\Pi BU(1))}(B U(1))$ by
$\zeta_{00}$, $\zeta_{11}$, and $x_{1,0} \in \HH^{2\sigma}(\Qpq 21)$ such that
\begin{alignat*}{2}
    \cwd &\quad&&\text{is infinitely divisible by $\xi$,} \\
    \zeta_1 &&&\text{is invertible,} \\
    x_{1,0} &&&\text{is infinitely divisible by $\zeta_{00}$, and} \\
    (1-\kappa)x_{1,0} + e^2 &&&\text{is infinitely divisible by $\zeta_{11}$,} 
\end{alignat*}
with the relations
\begin{align*}
    \zeta_{00}\zeta_{11} &= \zeta_0 \\
    x_{1,0}^2 &= e^2 x_{1,0} \\
    \cwd (1-e^{-2}\kappa x_{1,0}) &= \tau(\iota^{-2})\zeta_1 x_{1,0}.
\end{align*}
Given these relations, we have that
\begin{align*}
    (1-e^{-2}\kappa x_{1,0})^2 &= 1 - 2e^{-2}\kappa x_{1,0} + 2e^{-4}\kappa x_{1,0}^2 \\
    &= 1 - 2e^{-2}\kappa x_{1,0} + 2e^{-2}\kappa x_{1,0} \\
    &= 1,
\end{align*}
so $1-e^{-2}\kappa x_{1,0}$ is a unit and its own inverse. This allows us to rewrite the last relation as
\[
    \cwd = \tau(\iota^{-2})\zeta_1 x_{1,0}(1-e^{-2}\kappa x_{1,0}) = \tau(\iota^{-2})\zeta_1 x_{1,0},
\]
so we do not need $\cwd$ as a generator in this case.

Under the identification $\Qpq 21 = \Xpq 1{}$, we have
\[
    \Qpq 21^{00} = \Xpq 1{}^0 = \Xp{} \qquad\text{and}\qquad \Qpq 21^{11} = \Xpq 1{}^1 = \Xq{},
\]
so $\zeta_{00} \in \HH^{\Omega_{00}}(\Qpq 21)$ corresponds to $\zeta_0\in \HH^{\Omega_0}(\Xpq 1{})$
(we write $\zeta_{00} \corrs \zeta_0$)
and $\zeta_{11} \in \HH^{\Omega_{11}}(\Qpq 21)$ corresponds to $\zeta_1\in \HH^{\Omega_1}(\Xpq 1{})$.
The element $x_{1,0}$ is represented by the point $\Qpq 21^{11}$ while
the point $\Xpq 1{}^1$ represents $\cld$, but to match up gradings
we need to have $x_{1,0} \corrs \zeta_0\cld$,
and we then see that
\[
    (1-\kappa)x_{1,0} + e^2 \corrs (1-\kappa)\zeta_0 \cld + e^2 = \zeta_1\cxld.
\]
We know that $\cld$ is infinitely divisible by $\zeta_0$, which corresponds to $x_{1,0}$ being
infinitely divisible by $\zeta_{00}$, and that $\cxld$ is infinitely divisible by $\zeta_1$,
which corresponds to $(1-\kappa)x_{1,0} + e^2$ being infinitely divisible by $\zeta_{11}$.
We also have
\begin{align*}
    x_{1,0}^2 &\corrs \zeta_0^2\cwd[2] \\
    &= \xi \cwd\cxwd + e^2\zeta_0\cwd \\
    &= e^2\zeta_0\cwd \\
    &\corrs e^2 x_{1,0}.
\end{align*}

As for $\cwd$, this is the Euler class of $O(2)$ on $\Xpq 1{}$, which we computed in \cite{CHTFiniteProjSpace}
as
\[
    e(O(2)) = \tau(\iota^{-2})\zeta_0\cld \corrs \tau(\iota^{-2})x_{1,0},
\]
so we again see that $\cwd = \tau(\iota^{-2})x_{1,0}$.
Being in the image of $\tau$, this element is indeed infinitely divisible by $\xi$
(because $\rho(\xi) = \iota^2$ is invertible).

Summarizing, our calculation here matches up with the calculation in \cite{CHTFiniteProjSpace}, with the
correspondence of elements being
\begin{align*}
    \zeta_{00} &\corrs \zeta_0 \\
    \zeta_{11} &\corrs \zeta_1 \\
    x_{1,0} &\corrs \zeta_0\cld \\
    x'_{1,0} = (1-\kappa)x_{1,0} + e^2 &\corrs \zeta_1\cxld \\
    \cwd = \tau(\iota^{-2})x_{1,0} &\corrs e(O(2)) = \tau(\iota^{-2})\zeta_0\cld.
\end{align*}
We also have that the unit $1-e^{-2}\kappa x_{1,0}$ corresponds to the unit
$1-e^{-2}\kappa\zeta_0\cld$ found in \cite[Proposition~11.5]{Co:InfinitePublished}.
\end{remark}

\section{Quadrics of type (D,D)}

We consider here the case of $\Qpq 2{2q}$ with $q > 1$. This entails the case $\Qpq{2q}2$, and
we will deal with $\Qpq 22$ in \S\ref{sec:22}.
The results in this section are very similar to those in the preceding section,
so we will skip over many of the proofs.

As in the preceding section and the introduction,
\[
    RO(\Pi\Qpq 2{2q}) = \Z\{1,\sigma,\Omega_{00},\Omega_{11},\Omega_1\}/\rels{\Omega_{00} + \Omega_{11} + \Omega_1 = 2\sigma - 2}
\]
and we have elements $\zeta_{00}$, $\zeta_{11}$, and $\zeta_1$
in the cohomology of $\Qpq 2{2q}$ corresponding to the three components of $\Qpq 2{2q}^\GG$.
We again consider two cofibrations, using the following maps, consistent with those in (\ref{eqn:inclusions}).
\begin{alignat*}{3}
    i_0\colon \Xpq 1q &\includesin \Qpq{2}{2q} &\qquad&& i[x:\vec y] &= [x:0:\vec y:\vec 0] \\
    i_1\colon \Xpq 1q &\includesin \Qpq{2}{2q} &&& i[x:\vec y] &= [x:0:\vec 0:\vec y] \\
    i_2\colon \Xpq 1q &\includesin \Qpq{2}{2q} &&& i[x:\vec y] &= [0:x:\vec y:\vec 0] \\
    i_3\colon \Xpq 1q &\includesin \Qpq{2}{2q} &&& i[x:\vec y] &= [0:x:\vec 0:\vec y].
\end{alignat*}
The normal bundle to each of these maps is isomorphic to
$\nu = \omega\dual \dirsum q\chi\omega\dual - O(2)$,
which has dimension $\omega + q\chi\omega\dual - 2$, which we also denote by $\nu$.
The following is proved the same way as Proposition~\ref{prop:db cofibration}.

\begin{proposition}\label{prop:dd cofibration}
We have cofibration sequences
\begin{align*}
    \Xpq 1q_+ &{}\xrightarrow{i_0} (\Qpq{2}{2q})_+ \to \susp^\nu i_3(\Xpq 1q)_+ \\
\intertext{and}
    \Xpq 1q_+ &{}\xrightarrow{i_2} (\Qpq{2}{2q})_+ \to \susp^\nu i_1(\Xpq 1q)_+
\end{align*}
\qed
\end{proposition}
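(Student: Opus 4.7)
My plan is to repeat the argument of Proposition~\ref{prop:db cofibration} after dropping the single central coordinate $w$. For the first cofibration sequence I would write down the projection
\[
   \Qpq{2}{2q} \setminus i_0(\Xpq 1q) \to i_3(\Xpq 1q), \qquad [x:u:\vec y:\vec v] \mapsto [0:u:\vec 0:\vec v],
\]
and for the second the symmetric projection
\[
   \Qpq{2}{2q} \setminus i_2(\Xpq 1q) \to i_1(\Xpq 1q), \qquad [x:u:\vec y:\vec v] \mapsto [x:0:\vec 0:\vec v].
\]
Each is well-defined, since a point of $\Qpq{2}{2q}$ lies outside the copy of $\Xpq 1q$ being deleted precisely when the target coordinates are not all zero, and in each case the target satisfies the defining quadric equation trivially (all the ``cross'' products vanish).

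Next I would identify each projection as a vector bundle whose virtual dimension in $RO(\Pi B U(1))$ is $\nu = \omega + q\chi\omega\dual - 2$. Fixing a representative $(u_0,\vec v_0)$ of a base point in $i_3(\Xpq 1q)$, the fiber consists of the points $[x:u_0:\vec y:\vec v_0]$ subject to the single linear constraint $xu_0 + \sum_j y_j(v_0)_j = 0$, which is the expected $q$-dimensional complex linear subspace of $\C^{q+1}$. Tracking the $\GG$-action, the summands $\omega\dual$ and $q\chi\omega\dual$ are read off from the $x$ slot and the $\vec y$ slots respectively, and the subtraction of $O(2)$ encodes the quadric relation; this is the same computation as in the $(B,D)$ case, with the sole difference being the absence of the extra $\chi\omega\dual$ summand formerly contributed by $w$. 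The two cofibration sequences then follow from the standard fact that the cofiber of a closed smooth $\GG$-equivariant inclusion is the Thom space of its normal bundle, applied to the identifications just made; in particular, the open complement of $i_0(\Xpq 1q)$ has been realized as the total space of the normal bundle to $i_3(\Xpq 1q)$, yielding the cofiber $\susp^\nu i_3(\Xpq 1q)_+$.

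The only real obstacle is verifying that the bundle structure together with its $\GG$-action matches $\omega\dual \dirsum q\chi\omega\dual - O(2)$ on the nose, rather than some twisted variant. However, since the $(B,D)$ version of this bookkeeping is already carried out in \cite{CH:QuadricsII} and the linear algebra is identical aside from the removal of the $w$ variable, I do not expect any new difficulty to arise.
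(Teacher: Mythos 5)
Your proposal is correct and follows exactly the paper's route: the paper proves this proposition by citing the same argument as in the $(B,D)$ case, namely exhibiting the projection from the complement of $i_0(\Xpq 1q)$ (resp.\ $i_2(\Xpq 1q)$) onto $i_3(\Xpq 1q)$ (resp.\ $i_1(\Xpq 1q)$) and identifying it with the normal bundle $\nu$, so that the cofiber is the Thom space $\susp^\nu i_3(\Xpq 1q)_+$. Your fiber computation and the bookkeeping of the $\GG$-action (dropping one $\chi\omega\dual$ summand relative to the $(B,D)$ case) supply the details the paper leaves implicit.
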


Let
\begin{align*}
    x_{1,q} &= (i_3)_!(1) = [i_3(\Xpq 1q)]^* \in \HH^\nu(\Qpq 2{2q}) \\
\intertext{and}
    x'_{1,q} &= (i_1)_!(1) = [i_1(\Xpq 1q)]^* \in \HH^\nu(\Qpq 2{2q}).
\end{align*}
As in the (D,B) case, $x_{1,q}$ is infinitely divisible by $\zeta_{00}$ and
$x'_{1,q}$ is infinitely divisible by $\zeta_{11}$.

This time, we let
\[
    \divq = \cxwd[q] - e^{-2}\kappa\cxwd x_{1,q}.
\]
In \cite[Proposition~7.2]{CH:QuadricsII} 
we proved that
\begin{alignat*}{2}
    \cwd &\text{ is infinitely divisible by $\zeta_0$ and} \\
    \divq &\text{ is infinitely divisible by $\zeta_1$.}
\end{alignat*}

We then get the following in the same way as Proposition~\ref{prop:db splitting}.

\begin{proposition}\label{prop:dd splitting}
Let $q > 1$.
If $m \geq 0$, then there is a split short exact sequence
\begin{multline*}
    0 \to \susp^{\nu-m\Omega_0}\HH^{RO(\Pi BU(1))}(\Xpq 1q) \xrightarrow{(i_3)_!}
    \HH^{m\Omega_{11} + RO(\Pi BU(1))}(\Qpq 2{2q}) \\ \xrightarrow{i_0^*}
    \HH^{RO(\Pi BU(1))}(\Xpq 1q) \to 0.
\end{multline*}
If $m \leq 0$, then there is a split short exact sequence
\begin{multline*}
    0 \to \susp^\nu\HH^{RO(\Pi BU(1))}(\Xpq 1q) \xrightarrow{(i_1)_!}
    \HH^{m\Omega_{11} + RO(\Pi BU(1))}(\Qpq 2{2q}) \\ \xrightarrow{i_2^*}
    \susp^{-m\Omega_0}\HH^{RO(\Pi BU(1))}(\Xpq 1q) \to 0.
\end{multline*}
\qed
\end{proposition}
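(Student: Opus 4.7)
The plan is to mirror the proof of Proposition~\ref{prop:db splitting} step by step, exploiting the fact that the cofibrations of Proposition~\ref{prop:dd cofibration} have exactly the same structural form as those of Proposition~\ref{prop:db cofibration}. From each cofibration I would extract a long exact sequence in $\HH^\gr$, and show that on the specified cosets of $RO(\Pi BU(1))$ inside $RO(\Pi\Qpq 2{2q})$ the relevant restriction map ($i_0^*$ for $m \geq 0$, $i_2^*$ for $m \leq 0$) is surjective. Surjectivity collapses the long exact sequence into a short exact sequence, and the splitting is then automatic from the freeness of the targets $\HH^{RO(\Pi BU(1))}(\Xpq 1q)$ as $\HS$-modules, which is established in \cite{CH:QuadricsII}.

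The computational core is essentially the same as in the (D,B) case. First I would record that $i_0^*$ sends $(\Omega_{00},\Omega_{11},\Omega_1)$ to $(\Omega_0,0,\Omega_1)$ and $(\zeta_{00},\zeta_{11},\zeta_1)$ to $(\zeta_0,1,\zeta_1)$, and that $i_0^*(x_{1,q}) = 0$; the latter follows at once from the cofibration $\Xpq 1q_+ \xrightarrow{i_0} (\Qpq 2{2q})_+ \to \susp^\nu i_3(\Xpq 1q)_+$, since $x_{1,q}$ is the image of the Thom class of $\nu$. With the (D,D) definition $\divq = \cxwd[q] - e^{-2}\kappa\cxwd x_{1,q}$ this still gives $i_0^*(\divq) = \cxwd[q]$. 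Combined with the divisibilities of $\cwd$ by $\zeta_0$ and of $\divq$ by $\zeta_1$ from \cite[Proposition~7.2]{CH:QuadricsII}, every basis element of $\HH^{RO(\Pi BU(1))}(\Xpq 1q)$ (as described in \cite{CH:QuadricsII}) can be written as $i_0^*$ of an explicit monomial in $\zeta_0$, $\zeta_1$, $\cwd$, $\cxwd$, and $\divq$. Multiplication by $\zeta_{11}^m$ then shifts these lifts into the grading $m\Omega_{11} + RO(\Pi BU(1))$ for any $m \geq 0$ without disturbing the image under $i_0^*$, since $i_0^*(\zeta_{11}) = 1$.

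The argument for $i_2^*$ and $m \leq 0$ is entirely symmetric, with the roles of $\zeta_{00}$ and $\zeta_{11}$ interchanged and multiplication by $\zeta_{00}^{-m}$ (rather than $\zeta_{11}^m$) used to reach the correct coset—note that $\Omega_{00} + \Omega_{11} \in RO(\Pi BU(1))$, so this shift is legitimate. The only genuinely new verifications beyond what is already in Proposition~\ref{prop:db splitting} are the identities $i_0^*(x_{1,q}) = 0$ and $i_2^*(x'_{1,q}) = 0$, both formal from Proposition~\ref{prop:dd cofibration}, and the check that the modified $\divq$ still pulls back to $\cxwd[q]$. Consequently the main obstacle is purely notational—keeping track of the three fixed-set components and the $m\Omega_{11}$ shift—and no essentially new idea is required beyond the (D,B) argument.
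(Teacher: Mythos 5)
Your proposal is correct and follows exactly the route the paper intends: the paper's own ``proof'' of Proposition~\ref{prop:dd splitting} is simply the remark that it is proved in the same way as Proposition~\ref{prop:db splitting}, and your write-up fills in precisely the same steps, including the one genuinely new check that the (D,D) version of $\divq$ still satisfies $i_0^*(\divq) = \cxwd[q]$ because $i_0^*(x_{1,q}) = i_0^*(i_3)_!(1) = 0$ by exactness.
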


\begin{corollary}\label{cor:dd additive}
Additively,
\[
    \HH^\gr(\Qpq{2}{2q}) \iso \HH^\gr(\Xpq 1q) \dirsum \susp^\nu\HH^\gr(\Xpq 1q)
\]
where $\nu = \omega + q\chi\omega - 2$.
In gradings $m\Omega_{11} + RO(\Pi BU(1))$ with $m\geq 0$, 
the first summand is regraded via $i_0^*$ while the second is regraded via $i_3^*$;
if $m < 0$, the first summand is regraded via $i_2^*$ while the second is regraded via $i_1^*$.
\qed
\end{corollary}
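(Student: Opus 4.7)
The plan is to deduce this corollary as a formal consequence of Proposition~\ref{prop:dd splitting}, exactly as Corollary~\ref{cor:db additive} was deduced from Proposition~\ref{prop:db splitting}. First I would partition $RO(\Pi\Qpq 2{2q})$ into the cosets $m\Omega_{11} + RO(\Pi BU(1))$ for $m \in \Z$; the relation $\Omega_{00}+\Omega_{11}+\Omega_1 = 2\sigma - 2$ shows that every element is congruent modulo $RO(\Pi BU(1))$ to some integer multiple of $\Omega_{11}$, and distinct multiples lie in distinct cosets, so this is indeed a partition. It then suffices to establish the additive decomposition one coset at a time and assemble the pieces.

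For each $m \geq 0$, the first split short exact sequence in Proposition~\ref{prop:dd splitting} identifies $\HH^{m\Omega_{11}+RO(\Pi BU(1))}(\Qpq 2{2q})$ as a direct sum of $\HH^{RO(\Pi BU(1))}(\Xpq 1q)$ (via the splitting of $i_0^*$) and $\susp^{\nu-m\Omega_0}\HH^{RO(\Pi BU(1))}(\Xpq 1q)$ (via $(i_3)_!$). The first summand is what the corollary calls \emph{regraded via} $i_0^*$; for the second, one checks that $i_3^*\colon RO(\Pi\Qpq 2{2q}) \to RO(\Pi\Xpq 1q)$ sends $\Omega_{11}$ to $\Omega_0$ (since $i_3^{-1}\Qpq 2{2q}^{11} = \Xp{}\subset\Xpq 1q$), so the grading shift $\nu - m\Omega_0$ in the source of $(i_3)_!$ corresponds under $i_3^*$ to the grading $\nu + m\Omega_{11}$ in $\Qpq 2{2q}$. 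This identifies the second summand with $\susp^\nu\HH^\gr(\Xpq 1q)$ regraded via $i_3^*$. For $m < 0$ the second short exact sequence of the proposition yields the analogous decomposition, with $i_2^*$ and $i_1^*$ taking over the roles of $i_0^*$ and $i_3^*$.

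Taking the direct sum over all $m \in \Z$ assembles into the claimed additive isomorphism. There is no genuine obstacle, since all the real content has already been absorbed into Proposition~\ref{prop:dd splitting}; the only matter requiring care is the bookkeeping of the grading identifications $i_0^*$, $i_3^*$, $i_2^*$, $i_1^*$ on $\Omega_{00}$, $\Omega_{11}$, $\Omega_1$, which are computed exactly as in the proof of Proposition~\ref{prop:db splitting}.
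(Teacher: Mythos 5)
Your proposal is correct and is exactly the deduction the paper intends: the corollary is stated with no written proof precisely because it follows from Proposition~\ref{prop:dd splitting} coset by coset, with the only content being the grading bookkeeping ($i_0^*(\Omega_{11})=0$, $i_3^*(\Omega_{11})=\Omega_0$, etc.) that you carry out. Nothing to add.
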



The comments we made after Corollary~\ref{cor:db additive} about naming basis elements apply
here as well.

The relationship between $x'_{1,q}$ and $x_{1,q}$ is somewhat different in this case.
First note that the nonequivariant space underlying $\Qpq 2{2q}$ is a quadric of type D and that
\begin{align*}
    \rho(x_{1,q}) &= y \\
\intertext{but}
    \rho(x'_{1,q}) &= c^{q} - y,
\end{align*}
as the two are represented by the projectivizations of two
maximal isotropic affine subspaces whose intersection has codimension one in each.

\begin{lemma}\label{lem:dd xs}
$x'_{1,q} = -(1-\kappa)x_{1,q} + \zeta_1\divq = -(1-\epsilon)x_{1,q} + \zeta_1\cxwd[q]$
where $\epsilon = e^{-2}\kappa\zeta_0\cwd$ and $1-\epsilon$ is a unit in the cohomology of $BU(1)$.
\end{lemma}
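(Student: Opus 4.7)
The plan is to mirror the proof of Lemma~\ref{lem:db xs} essentially verbatim. Both $x_{1,q}$ and $x'_{1,q}$ lie in $\HH^\nu(\Qpq 2{2q})$ with $\nu \in RO(\Pi BU(1))$, so the additive basis description provided by Corollary~\ref{cor:dd additive}, combined with the structure of $\HH^{RO(\Pi BU(1))}(\Qpq 2{2q})$ from \cite{CH:QuadricsII}, pins down exactly which basis monomials can contribute. I expect the only basis elements of $\HH^\nu(\Qpq 2{2q})$ to be $x_{1,q}$, $\zeta_1\cxwd[q]$ (arising from the first summand of Corollary~\ref{cor:dd additive} via $\divq$), and one correction term $e^{-2}\kappa\zeta_1\cxwd x_{1,q}$, analogous to the $e^{-2}\kappa\zeta_0\cwd x_{1,q}$ correction appearing in the (B,D) case.

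Accordingly I would write
\[
    x'_{1,q} = \alpha x_{1,q} + \beta\zeta_1\cxwd[q] + \gamma e^{-2}\kappa\zeta_1\cxwd x_{1,q}
\]
with $\alpha \in A(\GG)$ and $\beta,\gamma \in \Z$, and pin the coefficients down in two stages. Applying $\rho$ and using $\rho(x_{1,q}) = y$, $\rho(x'_{1,q}) = c^q - y$, $\rho(\kappa) = 0$, and $\rho(\zeta_1\cxwd[q]) = c^q$ should force $\beta = 1$ and $\rho(\alpha) = -1$. Taking fixed points, with the three-component target indexed by $\Qpq 2{2q}^{00}$, $\Qpq 2{2q}^{11}$, $\Qpq 2{2q}^1$, I would read off $x_{1,q}^\GG$ and $(x'_{1,q})^\GG$ from the defining inclusions $i_3$ and $i_1$, and similarly compute $(\zeta_1\cxwd[q])^\GG$ and $(e^{-2}\kappa\zeta_1\cxwd x_{1,q})^\GG$ using the known behavior of $\zeta_1$ and $\cxwd$. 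The resulting linear equations should yield $\gamma = -1$ and $\alpha^\GG = -(1-\kappa)^\GG$, which with $\rho(\alpha) = -1$ forces $\alpha = -(1-\kappa)$. Repackaging the last two terms via $\divq = \cxwd[q] - e^{-2}\kappa\cxwd x_{1,q}$ then produces the first equality.

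For the second equality, substitute $\divq$ back into $-(1-\kappa)x_{1,q} + \zeta_1\divq$ and collect the coefficient of $x_{1,q}$; this reduces the claim to the identity
\[
    1 - e^{-2}\kappa\zeta_0\cwd = (1-\kappa) + e^{-2}\kappa\zeta_1\cxwd
\]
in $\HH^{RO(\Pi BU(1))}(BU(1))$, which is equivalent to $\kappa(\zeta_0\cwd + \zeta_1\cxwd) = \kappa e^2$, a standard identity. The unit assertion follows in the same spirit as Remark~\ref{rem:x11 cohomology}: expanding $(1-\epsilon)^2$ and applying $\kappa^2 = 2\kappa$ together with the multiplicative formula for $(\zeta_0\cwd)^2$ in $\HH^{RO(\Pi BU(1))}(BU(1))$ should give $(1-\epsilon)^2 = 1$, so $1-\epsilon$ is its own inverse.

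The main obstacle is the bookkeeping of the fixed-point triples: since $\Qpq 2{2q}^\GG$ has three components, $(x'_{1,q})^\GG$ has a nontrivial entry on $\Qpq 2{2q}^1 \homeo \Qexq{2q}$ reflecting $\rho(x'_{1,q}) = c^q - y$, coming from the fact that the two maximal isotropic subspaces realizing $i_1$ and $i_3$ intersect in codimension one. It is precisely this feature — absent in the (B,D) case where the two isotropic subspaces are disjoint in the appropriate sense — that produces the sign in $-(1-\kappa)$ and distinguishes the (D,D) formula from the (B,D) formula.
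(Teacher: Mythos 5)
Your overall strategy is the paper's: expand $x'_{1,q}$ against a basis of $\HH^\nu(\Qpq 2{2q})$ and pin down the coefficients with $\rho$ and $(-)^\GG$, using the sign coming from the codimension-one intersection of the two isotropic subspaces. The $\rho$-values you list, the reduction of the second equality to $\kappa(\zeta_0\cwd + \zeta_1\cxwd) = \kappa e^2$, and the direct verification that $(1-\epsilon)^2 = 1$ are all correct (the paper simply cites \cite[Proposition~11.5]{Co:InfinitePublished} for the unit statement, and your correction term $e^{-2}\kappa\zeta_1\cxwd x_{1,q}$ differs from the paper's $e^{-2}\kappa\zeta_0\cwd x_{1,q}$ only by $\kappa x_{1,q}$, which is absorbed into $\alpha$).

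The gap is in your ansatz: the group in grading $\nu$ is strictly larger in the (D,D) case than the three-term guess modeled on the (B,D) case. The paper writes
\[
    x'_{1,q} = \alpha x_{1,q} + \beta \zeta_1\divq + \gamma e^2\cxwd[q-1] + \delta e^{-2}\kappa\zeta_0\cwd x_{1,q}
\]
with $\alpha,\beta\in A(\GG)$ and $\gamma,\delta\in\Z$. You are missing two directions: the basis element $e^2\cxwd[q-1]$, whose fixed points $(1,1,c^{q-1})$ are independent of those of your three terms, and the full $A(\GG)$ of coefficients on the $\zeta_1\divq$ (equivalently $\zeta_1\cxwd[q]$) direction, which you restrict to $\Z$. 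Because the method of solving for coefficients via $\rho$ and $(-)^\GG$ is only valid once every possible contribution is listed, the expansion you write down is not justified a priori; it happens that the omitted coefficients vanish ($\gamma = 0$ and $\beta = 1$ exactly in $A(\GG)$), but that is a conclusion of the fixed-point computation, not something you can assume. The fix is exactly the paper's: include the extra term and the extra coefficient freedom, and observe that the system of equations from $\rho$ and the three fixed-set components forces them to be trivial. (Alternatively, you could define $w = -(1-\kappa)x_{1,q} + \zeta_1\divq$, check $\rho(x'_{1,q}-w)=0$ and $(x'_{1,q}-w)^\GG=0$, and invoke injectivity of $(\rho,(-)^\GG)$ in grading $\nu$ --- but that injectivity again requires knowing the full list of contributions and their coefficient groups.)
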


\begin{proof}
That $1-\epsilon$ is a unit in the cohomology of $BU(1)$ was shown in
\cite[Proposition~11.5]{Co:InfinitePublished}.
The equality of the two expressions for $x_{1,q}'$ follows from the definition of $\divq$.

Taking a basis that is slightly different from our usual convention, we can write
\[
    x'_{1,q} = \alpha x_{1,q} + \beta \zeta_1\divq + \gamma e^2\cxwd[q-1] + \delta e^{-2}\kappa\zeta_0\cwd x_{1,q}
\]
for some $\alpha, \beta \in A(\GG)$ and $\gamma, \delta\in \Z$.
We have
\begin{align*}
    \rho(x'_{1,q}) &= c^q - y \\
    \rho(x_{1,q}) &= y \\
    \rho(\zeta_1\divq) &= c^q \\
    \rho(e^2\cxwd[q-1]) &= 0 \\
    \rho(e^{-2}\kappa\zeta_0\cwd x_{1,q}) &= 0,
\intertext{from which we see that $\rho(\alpha) = -1$ and $\rho(\beta) = 1$.
Taking fixed points, we have}
    (x'_{1,q})^\GG &= (1,0,y) \\
    x_{1,q}^\GG &= (0,1,y) \\
    (\zeta_1\divq)^\GG &= (1,1,0)\bigl[ (1,1,c^q) - 2(0,1,cy) \bigr] \\ &= (1,-1,0) \\
    (e^2\cxwd[q-1])^\GG &= (1,1,c^{q-1}) \\
    (e^{-2}\kappa\zeta_0\cwd x_{1,q})^\GG &= 2(0,0,c^{q-1}).
\end{align*}
This tells us that $\alpha^\GG = 1$, $\beta^\GG = 1$, and $\gamma = \delta = 0$.
Hence $\alpha = -(1-\kappa)$ and $\beta = 1$, giving the formula in the lemma.
\end{proof}

The point, as in the preceding section, is that we don't really need $x'_{1,q}$ as a generator,
since it can be written in terms of $x_{1,q}$. Since $1-\kappa$ is its own inverse, we can also write
\[
    -(1-\kappa)x'_{1,q} = x_{1,q} - (1-\kappa)\zeta_1\divq,
\]
which is the expression we use in the theorem below.

The multiplicative structure is as follows, proven similarly to
Theorem~\ref{thm:db multiplicative},
using \cite[Theorem~7.6]{CH:QuadricsII}.

\begin{theorem}\label{thm:dd multiplicative}
Let $q > 1$.
As an algebra over $\HH^{RO(\Pi BU(1))}(B U(1))$, 
the cohomology $\HH^\gr(\Qpq{2}{2q})$
is generated by
\begin{align*}
    \zeta_{00} &\in \HH^{\Omega_{00}}(\Qpq 2{2q}) \\
    \zeta_{11} &\in \HH^{\Omega_{11}}(\Qpq 2{2q}) \\
    x_{1,q} &\in \HH^{(q-1)\chi\omega + 2\sigma}(\Qpq{2}{2q})
\end{align*}
subject to the facts that
\begin{alignat*}{2}
    &\cwd &\quad&\text{is infinitely divisible by $\zeta_0$,} \\
    &\divq = \cxwd[q] - e^{-2}\kappa  \cxwd x_{1,q} &&\text{is infinitely divisible by $\zeta_1$,} \\
    &x_{1,q} &&\text{is infinitely divisible by $\zeta_{00}$, and} \\
    &x_{1,q} - (1-\kappa)\zeta_1\divq &&\text{is infinitely divisible by $\zeta_{11}$,} 
\end{alignat*}
and the following relations:
\begin{align*}
    \zeta_{00}\zeta_{11} &= \zeta_0 \\
    x_{1,q}^2 &=
        \begin{cases}
            e^2\cxwd[q-1]x_{1,q} & \text{if $q$ is odd} \\
            \zeta_1\cxwd[q] x_{1,q} & \text{if $q$ is even,}
        \end{cases}
    \\
   \cwd\divq &= \tau(\iota^{-2})\zeta_0\cwd x_{1,q}.
\end{align*}
\qed
\end{theorem}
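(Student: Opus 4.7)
The plan is to adapt the two-stage strategy that proved Theorem~\ref{thm:db multiplicative}. First I would establish an intermediate proposition asserting that, as an algebra over $\HH^{RO(\Pi BU(1))}(\Qpq 2{2q})$, the $RO(\Pi\Qpq 2{2q})$-graded cohomology is generated by $\zeta_{00}$ and $\zeta_{11}$ subject to the relation $\zeta_{00}\zeta_{11} = \zeta_0$ together with the two infinite-divisibility facts for $x_{1,q}$ (by $\zeta_{00}$) and $x_{1,q} - (1-\kappa)\zeta_1\divq$ (by $\zeta_{11}$). The second divisibility here is a restatement of the divisibility of $x'_{1,q}$ by $\zeta_{11}$, using the identification $x'_{1,q} = -(1-\kappa)x_{1,q} + \zeta_1\divq$ from Lemma~\ref{lem:dd xs} and the fact that $1-\kappa$ is its own inverse.

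For this first stage, I would work one coset $m\Omega_{11} + RO(\Pi BU(1))$ at a time, mirroring the commutative-diagram argument used in the (B,D) case. Let $A$ denote the presented ring and $B = \HH^\gr(\Qpq 2{2q})$. In the coset $m=0$ one checks $A^{RO(\Pi BU(1))} \iso B^{RO(\Pi BU(1))}$ because any monomial in the adjoined generators landing in the $RO(\Pi BU(1))$-grading must have matching net powers of $\zeta_{00}$ and $\zeta_{11}$. For $m > 0$, Proposition~\ref{prop:dd splitting} identifies the submodule generated by $\zeta_{00}^{-m}x_{1,q}$ with the image of $(i_3)_!$, and multiplication by $\zeta_{11}^m$ identifies the complementary quotient with $\HH^{RO(\Pi BU(1))}(\Xpq 1q)$ regraded via $i_0^*$. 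Injectivity of that quotient map requires an annihilator lemma $x'_{1,q}z = 0$ if and only if $z \in \gens{x_{1,q}}$, proven exactly as Lemma~\ref{lem:annihilator}: since $i_1 \simeq i_0$ equivariantly, $x'_{1,q}z = (i_1)_!(i_0^*(z))$, and Proposition~\ref{prop:dd splitting} supplies the needed injectivity of $(i_1)_!$. The case $m < 0$ is dual, with the roles of $\zeta_{00}$ and $\zeta_{11}$ swapped and $x_{1,q}$ replaced by $x_{1,q} - (1-\kappa)\zeta_1\divq$.

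For the second stage I would then substitute the presentation of $\HH^{RO(\Pi BU(1))}(\Qpq 2{2q})$ over $\HH^{RO(\Pi BU(1))}(BU(1))$ from \cite[Theorem~7.6]{CH:QuadricsII}. This supplies $\cwd$ with its infinite divisibility by $\zeta_0$, the element $\divq = \cxwd[q] - e^{-2}\kappa\cxwd x_{1,q}$ with its divisibility by $\zeta_1$, the relation $\cwd\divq = \tau(\iota^{-2})\zeta_0\cwd x_{1,q}$, and the parity-dependent formula for $x_{1,q}^2$. Combined with the two new divisibility facts from Stage 1 and the Segre-type relation $\zeta_{00}\zeta_{11} = \zeta_0$, these assemble into the presentation stated in the theorem.

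The main obstacle I anticipate is bookkeeping the parity split for $x_{1,q}^2$: unlike the (B,D) case, the (D,D) formula bifurcates depending on whether $q$ is even or odd, reflecting the fact that the underlying nonequivariant space is a type-D quadric. This was settled in \cite[Theorem~7.6]{CH:QuadricsII} within the $RO(\Pi BU(1))$ grading, and since $x_{1,q}^2$ itself lies in that subring the relation pulls back; still, one should carefully verify that the case-split survives unchanged after extending to the larger grading and after re-expressing everything in terms of $x_{1,q}$ rather than $x'_{1,q}$, since Lemma~\ref{lem:dd xs} introduces an extra $\zeta_1\cxwd[q]$ term not present in the (B,D) analogue.
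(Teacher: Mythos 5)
Your two-stage plan is the one the paper intends (its own ``proof'' is just the deferral ``proven similarly to Theorem~\ref{thm:db multiplicative}, using \cite[Theorem~7.6]{CH:QuadricsII}''), and most of it does transfer. But one step you import verbatim fails in the (D,D) setting: the annihilator lemma and its proof via ``$i_1\simeq i_0$ equivariantly.'' In $\Qpq 2{2q+1}$ all maximal isotropic subspaces of the underlying odd quadric are equivalent, which is why $i_0$ and $i_1$ are homotopic there. In $\Qpq 2{2q}$ the underlying quadric is of type D, its maximal isotropic $q$-planes fall into two rulings, and $i_0(\Xpq 1q)\intersect i_1(\Xpq 1q)$ is a single point, of codimension $q$ in each, so the two lie in the same ruling only when $q$ is even. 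For $q$ odd they are not even nonequivariantly homotopic, and $\rho(x_{1,q})\rho(x'_{1,q}) = y(c^q-y) = c^qy \neq 0$, so $x_{1,q}x'_{1,q}\neq 0$. This falsifies both the statement of the transplanted Lemma~\ref{lem:annihilator} (take $z = x_{1,q}$) and the injectivity step of your coset argument, which multiplies by $\zeta_{11}^{-m}x'_{1,q}$ and needs $x_{1,q}x'_{1,q}=0$. The paper already records the ruling phenomenon in the form $\rho(x'_{1,q})=c^q-y$ and in the parity split for $x_{1,q}^2$; you flagged the latter but not this consequence of the same fact.

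The gap is repairable without any annihilator lemma. For $m>0$, the composite of multiplication by $\zeta_{11}^m$ with the projection $A^{m\Omega_{11}+RO(\Pi BU(1))}/J \to B^{m\Omega_{11}+RO(\Pi BU(1))}/K \iso \HH^{RO(\Pi BU(1))}(\Xpq 1q)$ sends $z$ to $i_0^*(z)$, since $i_0^*(\zeta_{11})=1$; by the $m=0$ case and Proposition~\ref{prop:dd splitting} this composite is an isomorphism out of $A^{RO(\Pi BU(1))}/\gens{x_{1,q}}$, so multiplication by $\zeta_{11}^m$ is injective with no reference to $x'_{1,q}$ at all (and dually for $m<0$ using $i_2^*$). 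Alternatively one can identify the actual annihilator of $x'_{1,q}$ as $\ker i_1^*$, the image of $(i_2)_!$, and track how its relation to $\gens{x_{1,q}}$ depends on the parity of $q$, but that is more work. With that step replaced, the rest of your proposal --- the surjectivity of the coset maps, the identification of the $\zeta_{11}$-divisible element via Lemma~\ref{lem:dd xs}, and importing the relations, including the parity-dependent formula for $x_{1,q}^2$, from \cite[Theorem~7.6]{CH:QuadricsII} --- goes through as you describe.
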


\section{A Grassmannian}

We pause from our calculations to
look at an interesting special case in detail, which we will use in the next section:
The Grassmannian $\Grpq 222$ of $2$-planes in $\Cpq 22$ is $\GG$-diffeomorphic to
$\Qexpq 24$. 
To see this, we use a modified version of the Pl\"ucker embedding.
We represent 2-planes in $\Cpq 22$ by $2\times 4$-matrices of complex numbers, which we
think of as arrays $[\vec a_1\ \vec a_2\ \vec a_3\ \vec a_4]$ with linearly independent rows, in which
the columns
$\vec a_1$ and $\vec a_2$ lie in $\Cp 2$ while $\vec a_3$ and $\vec a_4$ lie in $\Cq 2$.
Such a matrix represents the 2-plane in $\Cpq 22$ spanned by its rows, and two such matrices represent the
same 2-plane if they are in the same orbit of the general linear group $\Gl(\Cp 2)$.
The Pl\"ucker embedding (modified slighly by a negative sign) is then the map
$k\colon \Grpq 222\to \Xpq 24$ defined by
\[
    k[\vec a_1\ \vec a_2\ \vec a_3\ \vec a_4]
    = [\Delta_{12}:\Delta_{34}:\Delta_{13}:\Delta_{14}:\Delta_{23}:-\Delta_{24}],
\]
where $\Delta_{ij} = |\vec a_i\ \vec a_j|$ is the determinant.
The image of this map is exactly $\Qexpq 24$.
The fixed set $\Grpq 222^\GG$ consists of three components, which correspond to
the components of $\Qexpq 24^\GG$ as follows:
\begin{align*}
    k(\Grp 22) &= \Qpq 24^{00} = [1:0:0:0:0:0] \\
    k(\Xp 2\times \Xq 2) &= \Qpq 24^{1} = \Qexq 4 \\
    k(\Grq 22) &= \Qpq 24^{11} = [0:1:0:0:0:0]
\end{align*}
We will use this to identify
\[
    RO(\Pi\Grpq 222) = \Z\{1,\sigma,\Omega_{00},\Omega_1,\Omega_{11}\}/\rels{\Omega_{00} + \Omega_1 + \Omega_{11} = 2\sigma - 2}.
\]
Via the inclusion $\Grpq 222 \subset \Grpq 2\infty\infty = BU(2)$, we also can identify
\[
    RO(\Pi\Grpq 222) \iso RO(\Pi BU(2)).
\]
The latter was used in \cite{CH:bu2};
what we there called $\Omega_0$ we are here calling $\Omega_{00}$ and what we there called
$\Omega_2$ we are here calling $\Omega_{11}$, while $\Omega_1$ means the same thing here as it did there.

The Grassmannian has a tautological 2-plane bundle $\pi$.
The Pl\"ucker embedding classifies its exterior power, or determinant line bundle, $\lambda = \ext^2\pi$,
that is, $\lambda = k^*\omega$.
Writing $\cld$ for the Euler class of the dual of $\lambda$, this gives us the following
corollary of Theorem~\ref{thm:dd multiplicative} (combined with the structure of the cohomology of
$BU(1)$ from \cite{Co:InfinitePublished}).

\begin{corollary}\label{cor:gr24}
As an algebra over $\HS$, $\HH^\gr(\Grpq 222)$ is generated by elements
\begin{alignat*}{2}
    & \zeta_{00} &\quad& \text{in grading $\Omega_{00}$} \\
    & \zeta_1 && \text{in grading $\Omega_1$} \\
    & \zeta_{11} && \text{in grading $\Omega_{11}$} \\
    & \cld && \text{in grading $\lambda = 2 + \Omega_1$} \\
    & \cxld && \text{in grading $\chi\lambda = 2 + \Omega_{00} + \Omega_{11}$} \\
    & x_{1,2} && \text{in grading $\lambda + 2\chi\lambda -2 = \chi\lambda + 2\sigma$}
\end{alignat*}
subject to the facts that
\begin{alignat*}{2}
    & \cld &\quad& \text{is infinitely divisible by $\zeta_0 = \zeta_{00}\zeta_{11}$} \\
    & \divq = \cxld[2] - e^{-2}\kappa \cxld x_{1,2} && \text{is infinitely divisible by $\zeta_1$} \\
    & x_{1,2} && \text{is infinitely divisible by $\zeta_{00}$, and } \\
    & x_{1,2} - (1-\kappa)\zeta_1\divq && \text{is infinitely divisible by $\zeta_{11}$}
\end{alignat*}
and the following relations:
\begin{align*}
    \zeta_{00}\zeta_{11}\zeta_1 &= \xi \\
    \zeta_1\cxld &= (1-\kappa)\zeta_{00}\zeta_{11}\cld + e^2 \\
    x_{1,2}^2 &= \zeta_1\cxld[2] x_{1,2} \\
    \cld\divq &= \tau(\iota^{-2})\zeta_{00}\zeta_{11}\cld x_{1,2}
\end{align*}
\qed
\end{corollary}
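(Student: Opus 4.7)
The plan is to pull the corollary back to Theorem~\ref{thm:dd multiplicative} via the Pl\"ucker identification $k\colon\Grpq 222 \homeo \Qexpq 24$ and then supplement with two relations inherited from $BU(1)$.

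First I would verify that $k$ is a well-defined $\GG$-equivariant diffeomorphism onto $\Qexpq 24$. The classical Pl\"ucker identity reads $\Delta_{12}\Delta_{34} - \Delta_{13}\Delta_{24} + \Delta_{14}\Delta_{23} = 0$, so the sign twist introduced in the last coordinate is precisely what is needed to match the defining equation $\sum_i x_iu_i + \sum_j y_jv_j = 0$ of $\Qexpq 24$. Equivariance is automatic because the $\GG$-action on $\Cpq 22$ multiplies each minor $\Delta_{ij}$ by the product of the signs on its two columns. The three fixed components are then identified by listing the $\GG$-invariant $2$-planes in $\Cpq 22$: the plane $\Cp 2$ itself, the plane $\Cq 2$ itself, and planes of the form $L \dirsum L'$ with $L \subset \Cp 2$ and $L' \subset \Cq 2$ (parametrized by $\Xp 2 \times \Xq 2$), matching $\Qpq 24^{00}$, $\Qpq 24^{11}$, and $\Qpq 24^1 = \Qexq 4$ respectively.

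Next I would observe that $\lambda = \ext^2\pi$ is tautologically the pullback $k^*\omega$ once one identifies $\ext^2(\Cpq 22)$ with $\Cpq 24$ via the ordered Pl\"ucker basis (including the sign). This gives $\cld = k^*\cwd$ and $\cxld = k^*\cxwd$, with the grading identification $\lambda \corrs \omega$, $\chi\lambda \corrs \chi\omega$ on $RO(\Pi-)$; the labeled fixed-component classes $\zeta_{00}, \zeta_1, \zeta_{11}$ correspond directly on both sides. With these identifications in hand, Theorem~\ref{thm:dd multiplicative} at $q=2$ transports immediately: the generator $x_{1,2}$, all four divisibility statements, the relation $\zeta_{00}\zeta_{11} = \zeta_0$, the square $x_{1,2}^2 = \zeta_1\cxld[2] x_{1,2}$ (the even branch of the case split), and the relation $\cld\divq = \tau(\iota^{-2})\zeta_{00}\zeta_{11}\cld x_{1,2}$ all carry across after the substitution $\cwd \corrs \cld$, $\cxwd \corrs \cxld$.

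The two remaining relations, $\zeta_{00}\zeta_{11}\zeta_1 = \xi$ and $\zeta_1\cxld = (1-\kappa)\zeta_{00}\zeta_{11}\cld + e^2$, are pullbacks along the classifying map $\lambda\colon \Grpq 222 \to BU(1)$ of the known relations $\zeta_0\zeta_1 = \xi$ and $\zeta_1\cxwd = (1-\kappa)\zeta_0\cwd + e^2$ in $\HH^{RO(\Pi BU(1))}(BU(1))$ recorded in \cite{Co:InfinitePublished}, rewritten using $\zeta_0 = \zeta_{00}\zeta_{11}$. The main obstacle I expect is largely clerical rather than conceptual: making sure the sign convention in the modified Pl\"ucker embedding is consistent both with the defining equation of $\Qexpq 24$ and with the chosen labels on fixed components, and then verifying that the above list of generators and relations is genuinely complete. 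Completeness follows because Theorem~\ref{thm:dd multiplicative} already presents $\HH^\gr(\Qpq 24)$ as an algebra over $\HH^{RO(\Pi BU(1))}(BU(1))$, and the only further data needed to realize the cohomology as an algebra over $\HS$ with generators $\cld, \cxld, \zeta_1$ in place of $\cwd, \cxwd, \zeta_1$ are the two $BU(1)$-relations just recalled.
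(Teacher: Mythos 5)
Your proposal matches the paper's own route: the paper likewise establishes the sign-modified Pl\"ucker identification $k\colon \Grpq 222 \homeo \Qexpq 24$ with $\lambda = \ext^2\pi = k^*\omega$, matches the three fixed components exactly as you list them, and then deduces the corollary by specializing Theorem~\ref{thm:dd multiplicative} to $q=2$ (even branch for $x_{1,2}^2$) together with the $BU(1)$ relations from \cite{Co:InfinitePublished} that give $\zeta_{00}\zeta_{11}\zeta_1 = \xi$ and $\zeta_1\cxld = (1-\kappa)\zeta_{00}\zeta_{11}\cld + e^2$. Your verification of the Pl\"ucker quadratic relation against the defining equation of $\Qexpq 24$ is correct and fills in the one detail the paper leaves implicit.
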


\begin{remark}
What submanifold of $\Grpq 222$ represents $x_{1,2}$?
We know that, considered as an element of the cohomology
of $\Qpq 24$, it is represented by $\Xpq 12 \subset \Qpq 24$.
Examining the Pl\"ucker embedding, $\Xpq 12$ is the image of
\[
    \Grpq 212\subset \Grpq 222,
\]
where $\Cpq 12\subset \Cpq 22$ is missing the first copy of $\C$.
This is the zero locus of the section of $\pi\dual$ given by
\[
    \pi \includesin \Cpq 22 \to \C,
\]
where the second map is projection to the first factor of $\C$. 
Thus, $\Grpq 212$ represents both $x_{1,2}$ and $\cgd = e(\pi\dual)$
(the class we called $\cwd$ in \cite{CH:bu2}),
with a caveat: These classes live in different gradings, so we are considering
$\Grpq 212$ as having two different equivariant dimensions in representing these elements.
The gradings of the two elements, hence codimensions of $\Grpq 212$ in $\Grpq 222$, are
\begin{align*}
    \grad x_{1,2} &= \chi\omega + 2\sigma = 2 + 2\sigma + \Omega_{00} + \Omega_{11} \\
\intertext{and}
    \grad \cgd &= \pi = 4 + \Omega_1 + 2\Omega_{11} = 2 + 2\sigma - \Omega_{00} + \Omega_{11}.
\end{align*}
The difference is $\grad x_{1,2} - \grad \cgd = 2\Omega_{00}$, which we can accomodate
because $\Grpq 212$ has no fixed set component over $\Grp 22 = \Qpq 24^{00}$.
That is, geometrically, we are free to adjust its dimension by multiples of $\Omega_{00}$,
and algebraically, any element it represents will be infinitely divisible by $\zeta_{00}$.
The upshot is that
\[
    x_{1,2} = \zeta_{00}^2 \cgd \qquad\text{or}\qquad \cgd = \zeta_{00}^{-2}x_{1,2}.
\]
So we could rewrite Corollary~\ref{cor:gr24} using $\cgd$ in place of $x_{1,2}$
to better compare the result to the cohomology of $BU(2)$ computed in \cite{CH:bu2}.
\end{remark}

\begin{remark}
We can also ask what submanifold of $\Grpq 222$ represents $x'_{1,2}$, the class
represented in the cohomology of $\Qpq 24$ by 
\[
    i_1(\Xpq 12) = \{ [\Delta_{12}:\Delta_{34}:\Delta_{13}:\Delta_{14}:\Delta_{23}:-\Delta_{24}]
        \mid \Delta_{34} = \Delta_{13} = \Delta_{14} = 0 \}.
\]
This is the image of 
$\Xp{}\times \Xpq 12 \subset \Grpq 222$, where the copy of $\C$ in $\Xp{}$
is the second copy of $\C$ in $\Cpq 22$.
\end{remark}

\section{27 lines}\label{sec:27 lines}

As we did in \cite{CH:QuadricsI}, we consider a
well-known example in enumerative geometry, the result that
a cubic surface in $\Xp 4$ contains 27 lines;
see, for example \cite[\S6.2.1]{3264EisenbudHarris}.
In \cite{Braz:equivenumerative}, Brazelton determined the structure of that
set of 27 lines as an $S_4$-set when the symmetric group $S_4$ acts on $\Cp 4$ as its regular representation,
which then implies results for the subgroups of $S_4$.
Here, as an application of our results,
we use the calculation of the cohomology of $\Grpq 222$ in the preceding section to not only reproduce one of
Brazelton's calculations in the case of $\GG$, but to determine a bit more information about
those 27 lines.

In \cite{CH:QuadricsI}, we considered the case of $\Xpq 3{}$, while here
we consider the question of how many lines lie on a smooth $\GG$-invariant cubic surface in
$\Xpq 22$.
Such a surface is the zero locus of a homogeneous cubic polynomial $f(a_1,a_2,b_1,b_2)$
with $(a_1,a_2,b_1,b_2)\in \Cpq 22$.
For the zero locus to be $\GG$-invariant, we want either $tf = f$ or $tf = -f$.
If $tf = f$, we say that $f$ is \emph{even}, and if $tf = -f$, we say that $f$ is \emph{odd}.
(These really refer to the total degree in the variables $b_1$ and $b_2$ of each monomial in $f$.)

Consider first an even cubic $f$.
Such a polynomial is a $\GG$-equivariant linear functional
\[
    f\colon \Sym^3(\Cpq 22) \to \C
\]
and, on restricting to 2-planes, we get a section of $\Sym^3(\pi\dual)$ over $\Grpq 222$.
Each point in the zero locus of this section gives us a line on the cubic surface defined by $f$,
and this zero locus represents the Euler class of the bundle.
Hence, determining the set of such lines comes down to computing the Euler class
$e(\Sym^3(\pi\dual))$ in the cohomology of $\Grpq 222$
and determining the $\GG$-set in $\Grpq 222$ that represents it.

On the other hand, an odd cubic polynomial gives a linear functional
\[
    \Sym^3(\Cpq 22) \to \Cq{},
\]
hence a section of $\Sym^3(\chi\pi\dual)$,
and the question then comes down to computing the Euler class
$e(\Sym^3(\chi\pi\dual))$.

\subsection{The even case}
To compute $e(\Sym^3(\pi\dual))$, we first find its grading, which is determined
by the representations of $\GG$ given by the fibers of $\Sym^3(\pi\dual)$ 
over each of the components of $\Grpq 222^\GG$.
Over $\Grp 22$ the fibers are
\[
    \Sym^3(\Cp 2) \iso \Cp 4,
\]
over $\Grq 22$ the fibers are
\[
    \Sym^3(\Cq 2) \iso \Cq 4,
\]
and over $\Xp 2\times \Xq 2$ the fibers are
\[
    \Sym^3(\Cpq 1{}) \iso \Cpq 22.
\]
This gives
\[
    \grad \Sym^3(\pi\dual) = 4\Omega_{11} + 2\Omega_1 + 8.
\]
The identification of $\Grpq 222$ with $\Qpq 24$ and Corollary~\ref{cor:dd additive}
show that, in gradings $4\Omega_{11} + 2\Omega_1 + RO(\GG)$,
the cohomology of $\Grpq 222$ has a basis over $\HS$ given by the following elements.
\begingroup
\allowdisplaybreaks[0]
\begin{alignat*}{2}
    \text{\textbf{Element}} &\qquad& &\text{\textbf{Grading }} 4\Omega_{11} + 2\Omega_1 + {} \\
    \zeta_{11}^4\zeta_1^2 &&& 0 \\
    \zeta_{00}^{-1}\zeta_{11}^3\cld &&& 4-2\sigma \\
    \zeta_{00}^{-2}\zeta_{11}^2\cld\cxld &&& 6-2\sigma \\
    \zeta_{00}^{-3}\zeta_{11} x_{1,2} &&& 6 - 2\sigma \\
    \zeta_{00}^{-4}\cxld x_{1,2} &&& 8 - 2\sigma \\
    \zeta_{00}^{-3}\zeta_{11} \cld\cxld x_{1,2} &&& 8
\end{alignat*}
\endgroup
Of these, only the last two can contribute to the group in grading $4\Omega_{11} + 2\Omega_1 + 8$,
so
\[
    e(\Sym^3(\pi\dual)) = \alpha \zeta_{00}^{-3}\zeta_{11} \cld\cxld x_{1,2} 
    + \beta e^2 \zeta_{00}^{-4}\cxld x_{1,2}
\]
for some $\alpha\in A(\GG)$ and $\beta\in \Z$.

To determine $\alpha$ and $\beta$, we first reduce by $\rho$, using the known nonequivariant calculation
(which follows easily from the splitting principle):
\begin{align*}
    \rho(e(\Sym^3(\pi\dual))) &= 27 c^2 y \\
    \rho(\zeta_{00}^{-3}\zeta_{11} \cld\cxld x_{1,2}) &= c^2 y \\
    \rho(e^2 \zeta_{00}^{-4}\cxld x_{1,2}) &= 0
\end{align*}
Hence $\rho(\alpha) = 27$.

Now we want to compute fixed points. For $\Sym^3(\pi\dual)$, its restriction
to the point $\Grp 22$ is $\Cp 4$, hence its fixed point bundle there is also $\Cp 4$ and has 0 Euler characteristic.
Its restriction to the point $\Grq 22$ is $\Cq 4$, hence its fixed point bundle is the 0 bundle,
which has Euler characteristic 1.
Over the third component, $\Xp 2\times \Xq 2$, $\pi\dual$ splits as the sum of the duals of the two tautological
line bundles over these projective spaces, which we'll write as $\pi\dual = O_1(1)\dirsum O_2(1)$, hence
\begin{multline*}
    \Sym^3(\pi\dual)|(\Xp 2\times\Xq 2) \iso \\
    O_1(3) \dirsum ( O_1(2)\tensor O_2(1)) \dirsum (O_1(1) \tensor O_2(2))
    \dirsum O_2(3)
\end{multline*}
Thus, the fixed-point bundle is
\[
    \Sym^3(\pi\dual)^\GG|(\Xp 2\times\Xq 2) =
    O_1(3) \dirsum (O_1(1) \tensor O_2(2)).
\]
Writing $x_1$ and $x_2$ for the nonequivariant Euler classes of $O_1(1)$ and $O_2(1)$, respectively, we then get
(writing components in the order $(00,11,01)$)
\[
    e(\Sym^3(\pi\dual))^\GG = (0,1,3x_1(x_1 + 2x_2)) = (0,1, 6x_1 x_2)
\]
because $x_1^2 = 0$. The fixed sets of the basis elements are
\begin{align*}
    (\zeta_{00}^{-3}\zeta_{11} \cld\cxld x_{1,2})^\GG &= (0,0,(x_1+x_2)x_2) = (0,0,x_1x_2) \\
    (e^2 \zeta_{00}^{-4}\cxld x_{1,2})^\GG &= (0,1,(x_1+x_2)x_2) = (0,1,x_1x_2).
\end{align*}
From this we get that $\beta = 1$ and $\alpha^\GG = 5$.
From $\rho(\alpha) = 27$ and $\alpha^\GG = 5$ we conclude that $\alpha = 5 + 11g$, hence
\[
    e(\Sym^3(\pi\dual)) = (5 + 11g) \zeta_{00}^{-3}\zeta_{11} \cld\cxld x_{1,2} 
    + e^2 \zeta_{00}^{-4}\cxld x_{1,2}
\]

These multiples of $x_{1,2}$ are pushforwards along $i_3$ of elements from the cohomology of $\Xpq 12$,
and in that cohomology we have
\begin{align*}
    (5+11g)\zeta_0\cld\cxld + e^2\cxld
    &= (6 + 10g)\zeta_0\cld\cxld + ((1-\kappa)\zeta_0\cld + e^2)\cxld \\
    &= (6 + 10g)\zeta_0\cld\cxld + \zeta_1\cxld[2] \\
    &= (6 + 10g)[\Xq{}]^* + [\Xp{}]^*
\end{align*}
Pushing forward into $\Qpq 24$ and then interpreting in terms of $\Grpq 222$, we get
\begin{align*}
    e(\Sym^3(\pi\dual))
    &= (6 + 10g)[\Grpq 21{}]^* + [\Grq 22]^* \\
    &= 10[\GG]^* + 6[\Grpq 21{}]^* + [\Grq 22]^*.
\end{align*}
Thus, the 27 lines appear as 10 free pairs of the form $\GG\times\PP^1$, 6 invariant lines of
the form $\Xpq 1{}$, and one invariant line of the form $\Xq 2$.

To say that an even cubic in $\Xpq 22$ must contain a line of the form $\Xq 2$
is to say that it must contain the unique line of that form, $\Xq 2\subset \Xpq 22$.
We can also see this directly: If $f(a_1,a_2,b_1,b_2)$ is an even cubic,
then each of its monomials must be a multiple of either $a_1$ or $a_2$, hence $f$ must
vanish on all of $\Xq 2$, where $a_1 = a_2 = 0$.

\begin{remark}
This improves somewhat on a calculation done by Brazelton in \cite{Braz:equivenumerative},
for the action of the subgroup he calls $C_2^e$, which is $\{e, (1\ 2)(3\ 4)\} < S_4$.
Note that his action of $C_2^e$ on $\Cp 4$ interchanges the first two summands and also
the last two summands. A simple change of basis identifies this with the $\GG$ representation
$\Cpq 22$.

In his Table~1, Brazelton shows the resulting $C_2^e$-set of lines on a cubic as
$10[\GG/e] + 7[\GG/\GG]$ and does not differentiate the two kinds of lines
that occur among the seven left invariant by $\GG$.
We compared our approach to Brazelton's in detail in \cite[\S 5.1]{CH:QuadricsI}
and that discussion applies here as well.
Essentially, we are computing an element in $H^\GG_0(\Grpq 222)$ while
Brazelton's calculation amounts to computing its image in $A(\GG)$ resulting from the projection
$\Grpq 222\to *$ to a point.
\end{remark}

\subsection{The odd case}
We could do a similar analysis as above to calculate and interpret
$e(\Sym^3(\chi\pi\dual))$.
A quicker approach is to take advantage of the $\GG$-auto\-morph\-ism
\[
    \chi\colon \Grpq 222 \to \Grpq 222
\]
that interchanges $\Cp 2$ and $\Cq 2$, taking $\pi\dual$ to $\chi\pi\dual$.
Applying this to the result in the even case, we get
\[
    e(\Sym^3(\chi\pi\dual)) = 10[\GG]^* + 6[\Grpq 21{}]^* + [\Grp 22]^*.
\]
So we get a result similar to the even case, except that one of the $\GG$-invariant lines
now has the form $\Grp 22$ rather than $\Grq 22$.
Thus, an odd cubic in $\Xpq 22$ must contain the line $\Xp 2 \subset \Xpq 22$.
Again, we can see this latter fact directly:
If $f(a_1,a_2,b_1,b_2)$ is an odd cubic, then each of its monomials
must be a multiple of $b_1$ or $b_2$, hence $f$ must vanish on all of $\Xp 2$.

\section{The product of two projective lines}\label{sec:22}

Finally, we look at the case of $\Qpq 22$, which we noted in \S\ref{sec:notations}
is diffeomorphic to $\Xpq 1{}\times \Xpq 1{}$
and is the only case with four components to its fixed set.
In fact, its fixed set consists of four points, and recall that we label them as follows,
to be consistent with the labeling of the fixed points of $\Xpq 1{}\times\Xpq 1{}\subset B T(2)$
used in \cite{CH:bt2}:
\begin{align*}
    \Qpq 22^{00} &= [1:0:0:0] \\
    \Qpq 22^{11} &= [0:1:0:0] \\
    \Qpq 22^{01} &= [0:0:1:0] \\
    \Qpq 22^{10} &= [0:0:0:1].
\end{align*}
Recall also that, with this labeling, we write
\[
    RO(\Pi\Qpq 22) = \Z\{1,\sigma,\Omega_{00},\Omega_{11},\Omega_{01},\Omega_{10}\}/
        \rels{\textstyle \sum_{i,j} \Omega_{ij} = 2\sigma - 2},
\]
and the inclusion $\Qpq 22\includesin \Xpq 22 \includesin B U(1)$
induces the map on representation rings $RO(\Pi B U(1)) \to RO(\Pi\Qpq 22)$ given by
\[
    \Omega_0 \mapsto \Omega_{00} + \Omega_{11} \qquad\text{and}\qquad
    \Omega_1 \mapsto \Omega_{01} + \Omega_{10}.
\]
In cohomology, we have $\zeta_0\mapsto \zeta_{00}\zeta_{11}$, and we will write
\[
    \zeta_0 = \zeta_{00}\zeta_{11}\in \HH^{\Omega_{00}+\Omega_{11}}(\Qpq 22).
\]
Similarly, we write $\zeta_1 = \zeta_{01}\zeta_{10}$.

Considering $RO(\Pi B U(1))$ as a subgroup of $RO(\Pi\Qpq 22)$, its group of cosets is
\[
    RO(\Pi\Qpq 22)/RO(\Pi B U(1)) \iso \Z\{\Omega_{11},\Omega_{10} \}.
\]
Of course, which elements we use as representatives is somewhat arbitrary, but it will help to
pick two as a basis and use them consistently, and this choice works well for what follows.

We consider the four inclusions of $\Xpq 1{}$ in $\Qpq 22$ given in (\ref{eqn:inclusions}),
which we repeat here:
\begin{alignat*}{3}
    i_0&\colon \Xpq 1{}\includesin \Qpq 22 &\qquad&& i_0[x:y] &= [x:0:y:0] \\
    i_1&\colon \Xpq 1{}\includesin \Qpq 22 &&& i_1[x:y] &= [x:0:0:y] \\
    i_2&\colon \Xpq 1{}\includesin \Qpq 22 &&& i_2[x:y] &= [0:x:y:0] \\
    i_3&\colon \Xpq 1{}\includesin \Qpq 22 &&& i_3[x:y] &= [0:x:0:y]
\end{alignat*}
Given which fixed points they hit and which they miss, the induced maps in cohomology satisfy the following.
\begin{align*}
    i_0^*(\zeta_{00}) &= \zeta_0
        & i_0^*(\zeta_{11}) &= 1 
        & i_0^*(\zeta_{01}) &= \zeta_1
        & i_0^*(\zeta_{10}) &= 1 \\
    i_1^*(\zeta_{00}) &= \zeta_0
        & i_1^*(\zeta_{11}) &= 1 
        & i_1^*(\zeta_{01}) &= 1
        & i_1^*(\zeta_{10}) &= \zeta_1 \\
    i_2^*(\zeta_{00}) &= 1 
        & i_2^*(\zeta_{11}) &= \zeta_0
        & i_2^*(\zeta_{01}) &= \zeta_1
        & i_2^*(\zeta_{10}) &= 1 \\
    i_3^*(\zeta_{00}) &= 1 
        & i_3^*(\zeta_{11}) &= \zeta_0
        & i_3^*(\zeta_{01}) &= 1
        & i_3^*(\zeta_{10}) &= \zeta_1
\end{align*}

The four inclusions give four cofibration sequences.

\begin{proposition}\label{prop:22 cofibration}
We have the following four cofibration sequences.
\begin{align*}
    \Xpq 1{}_+ \xrightarrow{i_0} {}&(\Qpq 22)_+ \to \Susp^{2\sigma} i_3(\Xpq 1{})_+ \\
    \Xpq 1{}_+ \xrightarrow{i_1} {}&(\Qpq 22)_+ \to \Susp^{2\sigma} i_2(\Xpq 1{})_+ \\
    \Xpq 1{}_+ \xrightarrow{i_2} {}&(\Qpq 22)_+ \to \Susp^{2\sigma} i_1(\Xpq 1{})_+ \\
    \Xpq 1{}_+ \xrightarrow{i_3} {}&(\Qpq 22)_+ \to \Susp^{2\sigma} i_0(\Xpq 1{})_+ 
\end{align*}
\end{proposition}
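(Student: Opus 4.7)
The plan is to recycle the argument of Proposition~\ref{prop:db cofibration}, applied to each of the four inclusions in turn. The fixed-point data at the start of this section show that $i_0$ and $i_3$ hit complementary pairs of fixed points (namely $\Qpq 22^{00}, \Qpq 22^{01}$ versus $\Qpq 22^{11}, \Qpq 22^{10}$), and likewise $i_1$ and $i_2$ pair off; this is what matches each inclusion with its ``opposite'' in the four stated cofibrations.

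First I would treat the sequence built from $i_0$ and $i_3$. Mimicking Proposition~\ref{prop:db cofibration}, the relevant projection is
\[
    \Qpq 22 \setminus i_0(\Xpq 1{}) \to i_3(\Xpq 1{}), \qquad [x_1:u_1:y_1:v_1] \mapsto [0:u_1:0:v_1],
\]
which is well defined because $(u_1,v_1)\ne (0,0)$ on the complement of $i_0(\Xpq 1{})$. The defining relation $x_1u_1 + y_1v_1 = 0$ cuts the fiber over each base point down to a complex line in the remaining pair of coordinates $(x_1,y_1)$, so the complement is the total space of a complex line bundle over $i_3(\Xpq 1{})$. To identify its $\GG$-representation I would restrict to the two base fixed points $\Qpq 22^{11}$ and $\Qpq 22^{10}$ and inspect the residual action: in each case the surviving free fiber coordinate sits in the sign summand of $\Cpq 22$, so the fiber representation is $\sigma$ and the Thom space is $\Susp^{2\sigma} i_3(\Xpq 1{})_+$. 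The Pontryagin--Thom cofibration then gives the first sequence.

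The other three sequences follow from the same template, with the roles of the four coordinates permuted according to which inclusion one starts with. The main (and essentially only) obstacle is bookkeeping: one has to verify in each case that the surviving fiber coordinate over each base fixed point lies in the sign rather than the trivial summand of $\Cpq 22$, so that the Thom suspension really is $\Susp^{2\sigma}$. Once that is granted, no new geometric ingredient is needed beyond what was already used in Propositions~\ref{prop:db cofibration} and~\ref{prop:dd cofibration}.
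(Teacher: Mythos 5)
Your proposal is correct and follows essentially the same route as the paper: the paper's proof exhibits exactly the projection $[x:u:y:v]\mapsto[0:u:0:v]$, identifies the complement of $i_0(\Xpq 1{})$ with the normal bundle of $i_3(\Xpq 1{})$ of equivariant dimension $2\sigma$, and declares the other three sequences similar. One small caution in your fixed-point bookkeeping: over $\Qpq 22^{10}=[0:0:0:1]$ the surviving homogeneous fiber coordinate is $x$, which lies in the \emph{trivial} summand of $\Cpq 22$ --- the fiber representation is still $\sigma$ because the affine coordinate is the ratio $x/v$ and $v$ lies in the sign summand, so your conclusion stands but the blanket claim that the surviving coordinate always sits in the sign summand is not literally accurate.
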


\begin{proof}
The projection
\[
    \Qpq 22 \setminus i_0(\Xpq 1{}) \to i_3(\Xpq 1{}), \quad
    [x:u:y:v] \mapsto [0:u:0:v]
\]
can be identified with the normal bundle to $i_3(\Xpq 1{})$, which has equivariant dimension $2\sigma$.
The other sequences are similar.
\end{proof}

This gives us four long exact sequences and we will show that each breaks into short exact sequences
in a range of gradings.
Note that \cite[Proposition~7.2]{CH:QuadricsII} and the remark following it show that $\cwd$ is infinitely divisible by $\zeta_0$
and $\cxwd$ is infinitely divisible by $\zeta_1$.

\begin{proposition}\label{prop:22 splitting}
Consider gradings $m\Omega_{11} + n\Omega_{10} + RO(\Pi B U(1))$ with $m,n\in\Z$.
If $m, n\geq 0$, we have a split short exact sequence
\begin{multline*}
    0 \to
    \susp^{2\sigma-m\Omega_0-n\Omega_1}\HH^{RO(\Pi B U(1))}(\Xpq 1{}) \\ \xrightarrow{(i_3)_!}
    \HH^{m\Omega_{11} + n\Omega_{10} + RO(\Pi B U(1))}(\Qpq 22) \\ \xrightarrow{i_0^*}
    \HH^{RO(\Pi B U(1))}(\Xpq 1{})
    \to 0
\end{multline*}
If $m\geq 0$ and $n\leq 0$, we have a split short exact sequence 
\begin{multline*}
    0 \to
    \susp^{2\sigma-m\Omega_0}\HH^{RO(\Pi B U(1))}(\Xpq 1{}) \\ \xrightarrow{(i_2)_!}
    \HH^{m\Omega_{11} + n\Omega_{10} + RO(\Pi B U(1))}(\Qpq 22) \\ \xrightarrow{i_1^*}
    \susp^{-n\Omega_1}\HH^{RO(\Pi B U(1))}(\Xpq 1{})
    \to 0
\end{multline*}
If $m\leq 0$ and $n\geq 0$, we have a split short exact sequence 
\begin{multline*}
    0 \to
    \susp^{2\sigma-n\Omega_1}\HH^{RO(\Pi B U(1))}(\Xpq 1{}) \\ \xrightarrow{(i_1)_!}
    \HH^{m\Omega_{11} + n\Omega_{10} + RO(\Pi B U(1))}(\Qpq 22) \\ \xrightarrow{i_2^*}
    \susp^{-m\Omega_0}\HH^{RO(\Pi B U(1))}(\Xpq 1{})
    \to 0
\end{multline*}
If $m\leq 0$ and $n\leq 0$, we have a split short exact sequence 
\begin{multline*}
    0 \to
    \susp^{2\sigma}\HH^{RO(\Pi B U(1))}(\Xpq 1{}) \\ \xrightarrow{(i_0)_!}
    \HH^{m\Omega_{11} + n\Omega_{10} + RO(\Pi B U(1))}(\Qpq 22) \\ \xrightarrow{i_3^*}
    \susp^{-m\Omega_0-n\Omega_1}\HH^{RO(\Pi B U(1))}(\Xpq 1{})
    \to 0
\end{multline*}
\end{proposition}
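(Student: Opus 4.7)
The plan is to imitate the proof of Proposition~\ref{prop:db splitting} four times, once for each sign combination of $(m,n)$, using in each case the cofibration sequence from Proposition~\ref{prop:22 cofibration} whose restriction map sends the two relevant extra fixed-point classes to $1$. Each cofibration sequence produces a long exact sequence in $RO(\Pi\Qpq 22)$-graded cohomology, so the task reduces to showing that the indicated restriction is surjective in the specified gradings; splitting then follows from freeness of $\HH^{RO(\Pi BU(1))}(\Xpq 1{})$ as a module over $\HS$.

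For the first case, $m,n \geq 0$, I would use $i_0^*$. The table of values for the various $i_k^*$ recorded just before Proposition~\ref{prop:22 cofibration} shows that $i_0^*$ sends both $\Omega_{11}$ and $\Omega_{10}$ to $0$ and both $\zeta_{11}$ and $\zeta_{10}$ to $1$, so the image of a class graded in $m\Omega_{11} + n\Omega_{10} + RO(\Pi BU(1))$ lies in $RO(\Pi BU(1))$ as the target of the proposed sequence requires. When $m = n = 0$, a basis of $\HH^{RO(\Pi BU(1))}(\Xpq 1{})$ over $\HS$ can be written using $\zeta_0$, $\zeta_1$, $\cwd$, $\cxwd$ and the divisibility of $\cwd$ by $\zeta_0$ and of $\cxwd$ by $\zeta_1$ recorded in the discussion preceding the statement; each such basis element lifts tautologically to a class with the same expression on $\Qpq 22$. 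For $m, n > 0$ I would then multiply these lifts by $\zeta_{11}^m\zeta_{10}^n$, which $i_0^*$ sends to $1$ and which shifts the grading by exactly $m\Omega_{11} + n\Omega_{10}$.

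The three remaining sign regimes run in the same way with $i_1^*$, $i_2^*$ or $i_3^*$ in place of $i_0^*$, together with whichever product among $\zeta_{00}, \zeta_{11}, \zeta_{01}, \zeta_{10}$ the chosen pullback sends to $1$ and whose grading has the correct sign. The suspensions $\susp^{-m\Omega_0}$ and $\susp^{-n\Omega_1}$ appearing on the target side are bookkeeping for the fact that, for example, $i_2^*$ sends $\Omega_{11}$ to $\Omega_0$, so the image of a coset $m\Omega_{11} + n\Omega_{10} + RO(\Pi BU(1))$ lands in $m\Omega_0 + RO(\Pi BU(1))$ rather than in $RO(\Pi BU(1))$ itself.

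The main obstacle is purely bookkeeping: matching each sign regime to the correct restriction map, identifying the grading-shifter among the $\zeta_{ij}$'s, and keeping the signs on the target-side suspensions straight. All of the genuine cohomological content --- the divisibility of $\cwd$ and $\cxwd$, the structure of $\HH^{RO(\Pi BU(1))}(\Xpq 1{})$ as a free $\HS$-module, and the identifications of the normal bundles in the four cofibration sequences --- is already in place from \cite{CH:QuadricsII} and Proposition~\ref{prop:22 cofibration}.
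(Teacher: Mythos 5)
Your proposal is correct and follows essentially the same route as the paper: the same four cofibration sequences, surjectivity of the appropriate restriction map in each sign regime via the divisibility of $\cwd$ by $\zeta_0$ and $\cxwd$ by $\zeta_1$ together with multiplication by the product of the two $\zeta_{ij}$'s killed by that restriction, and splitting from freeness of the target.
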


\begin{proof}
The notation used in the statement of the proposition means, in the first case, that,
if $\alpha\in RO(\Pi BU(1))$, then we have a short exact sequence
\begin{multline*}
    0 \to
    \HH^{-2\sigma + m\Omega_0 + n\Omega_1 + \alpha}(\Xpq 1{})  \xrightarrow{(i_3)_!}
    \HH^{m\Omega_{11} + n\Omega_{10} + \alpha}(\Qpq 22) \\ \xrightarrow{i_0^*}
    \HH^{\alpha}(\Xpq 1{})
    \to 0
\end{multline*}
That the exact sequence has these gradings follows from the fact that
$i_0^*(\Omega_{11}) = 0$, $i_0^*(\Omega_{10}) = 0$,
$i_3^*(\Omega_{11}) = \Omega_0$, and $i_3^*(\Omega_{10}) = \Omega_1$.
The gradings are as shown in the other cases for similar reasons.

When $m = n = 0$, so we are
in the $RO(\Pi B U(1))$ grading, the fact that $\cwd$ is divisible by $\zeta_0$ and $\cxwd$ is divisible by $\zeta_1$
shows that $i_0^*$ is surjective, as elements in a basis of the cohomology of $\Xpq 1{}$ are hit by
elements of the same names in the cohomology of $\Qpq 22$. The same is true for restriction along the other three
inclusions. (This is a special case of \cite[Proposition~7.4]{CH:QuadricsII}.)
Because $i_0^*(\zeta_{11}) = 1$ and $i_0^*(\zeta_{10}) = 1$, if $m,n\geq 0$
we can hit basis elements by multiplying basis elements in the
$RO(\Pi B U(1))$ grading by $\zeta_{11}^m\zeta_{10}^n$. Hence, in this range of gradings,
$i_0^*$ is surjective, and the resulting short exact sequence splits because the cohomology
of $\Xpq 1{}$ is free.

In the second case, where $m\geq 0$ and $n\leq 0$,
$i_1^*$ is surjective because we can take elements in the $RO(\Pi BU(1))$ grading
and multiply by $\zeta_{11}^m\zeta_{01}^{-n}$ to get elements that map to basis elements
in the desired grading,
using that $i_1^*(\zeta_{11}) = 1$ and $i_1^*(\zeta_{01}) = 1$, and that
$\Omega_{01} + \Omega_{10} \in RO(\Pi BU(1))$.

The other two cases are similar. 
\end{proof}

\begin{corollary}\label{cor:22 additive}
Additively, we have the following direct sum decompositions.
\begin{multline*}
    \HH^{m\Omega_{11} + n\Omega_{10} + RO(\Pi BU(1))}(\Qpq 22) \\
    \iso
    \begin{cases}
        \HH^{RO(\Pi B U(1))}(\Xpq 1{}) \dirsum \susp^{2\sigma-m\Omega_0-n\Omega_1}\HH^{RO(\Pi B U(1))}(\Xpq 1{}) \\
            \qquad\text{if $m, n\geq 0$,} \\
        \susp^{-n\Omega_1}\HH^{RO(\Pi B U(1))}(\Xpq 1{}) \dirsum \susp^{2\sigma-m\Omega_0}\HH^{RO(\Pi B U(1))}(\Xpq 1{}) \\
            \qquad\text{if $m\leq 0$ and $n\leq 0$,} \\
        \susp^{-m\Omega_0}\HH^{RO(\Pi B U(1))}(\Xpq 1{}) \dirsum \susp^{2\sigma-n\Omega_1}\HH^{RO(\Pi B U(1))}(\Xpq 1{}) \\
            \qquad\text{if $m\geq 0$ and $n\geq 0$,} \\
        \susp^{-m\Omega_0-n\Omega_1}\HH^{RO(\Pi B U(1))}(\Xpq 1{}) \dirsum \susp^{2\sigma}\HH^{RO(\Pi B U(1))}(\Xpq 1{}) \\
            \qquad\text{if $m,n\leq 0$.}
    \end{cases}
\end{multline*}
\qed
\end{corollary}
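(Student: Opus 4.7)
My plan is to read the corollary off directly from Proposition~\ref{prop:22 splitting}. That proposition provides, in each of the four sign-quadrants for $(m,n)$, a \emph{split} short exact sequence whose kernel is a shifted copy of $\HH^{RO(\Pi BU(1))}(\Xpq 1{})$ (via one of the pushforwards $(i_k)_!$) and whose quotient is another shifted copy of the same ring (via one of the restrictions $i_k^*$). Since a split short exact sequence realizes its middle term as the direct sum of the outer terms, in each quadrant I simply record those two shifts and read them as the claimed decomposition.

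The only work is bookkeeping: in each quadrant I must check that the shift conventions used in the proposition translate correctly to the form displayed in the corollary. For instance, when $m,n\geq 0$ the proposition pairs $i_0^*$ with $(i_3)_!$, giving an unshifted $\HH^{RO(\Pi BU(1))}(\Xpq 1{})$ from $i_0^*$ (because $i_0^*(\Omega_{11}) = 0 = i_0^*(\Omega_{10})$) and a $\susp^{2\sigma - m\Omega_0 - n\Omega_1}$ shift from $(i_3)_!$; this matches the first displayed case. For the quadrant $m\geq 0$, $n\leq 0$ one uses the $i_2$/$i_1$ pair, for $m\leq 0$, $n\geq 0$ the $i_1$/$i_2$ pair, and for $m,n\leq 0$ the $i_3$/$i_0$ pair, each time translating the proposition's shifts into shifts of a fixed reference grading.

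The splittings themselves are automatic, since in every case the quotient is a shifted copy of $\HH^{RO(\Pi BU(1))}(\Xpq 1{})$, which is free over $\HS$ from the earlier computations. Thus I do not anticipate any mathematical obstacle; the only thing to be careful about is the clerical matching of the four cases, in particular making sure that the coset representatives $m\Omega_{11} + n\Omega_{10}$ line up with the pushforward shifts recorded in Proposition~\ref{prop:22 splitting} and not with some other choice of representatives modulo $RO(\Pi BU(1))$. Once this is verified case by case, the corollary follows with no further computation.
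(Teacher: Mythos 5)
Your proposal is correct and is exactly the argument the paper intends: the corollary is stated with no separate proof precisely because it is the direct-sum reading of the four split short exact sequences of Proposition~\ref{prop:22 splitting}, with the grading shifts transcribed case by case as you describe. One small point worth flagging from your bookkeeping: your quadrant assignments ($i_2$/$i_1$ for $m\geq 0$, $n\leq 0$ and $i_1$/$i_2$ for $m\leq 0$, $n\geq 0$) are the ones that match the proposition, which suggests the conditions printed on the middle two cases of the corollary's displayed formula are typos and should read ``$m\geq 0$ and $n\leq 0$'' and ``$m\leq 0$ and $n\geq 0$'' respectively.
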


To write down bases using Proposition~\ref{prop:22 splitting}, on the face of it we should need to use multiples of
the appropriate one of $(i_3)_!(1) = x_{1,1}$ 
(using the notation of \cite{CH:QuadricsII}), 
$(i_2)_!(1)$, $(i_1)_!(1)$, or $(i_0)_!(1)$.
Unlike we did earlier in the cases of $\Qpq 2n$ for $n>2$, we will not give the latter three temporary names
but instead immediately observe the following.

\begin{lemma}\label{lem:22 xs}
\begin{align*}
    (i_2)_!(1) &= -(1-\kappa)(1-\epsilon)x_{1,1} + \zeta_0\cwd \\
    (i_1)_!(1) &= -(1-\epsilon)x_{1,1} + \zeta_1\cxwd \\
    (i_0)_!(1) &= (1-\kappa)x_{1,1} + e^2
\end{align*}
where $\epsilon = e^{-2}\kappa \zeta_0\cwd$ and $1-\epsilon$ is one of the units in the cohomology
of $B U(1)$ found in \cite[Proposition~11.5]{Co:InfinitePublished}.
\end{lemma}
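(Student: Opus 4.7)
Each of the three classes $(i_2)_!(1)$, $(i_1)_!(1)$, and $(i_0)_!(1)$ lies in $\HH^{2\sigma}(\Qpq 22)$, since the normal bundle of each inclusion $i_k\colon \Xpq 1{}\includesin \Qpq 22$ has equivariant dimension $2\sigma$. Because $2\sigma\in RO(\Pi BU(1))$, this grading falls in the coset $m=n=0$ of Proposition~\ref{prop:22 splitting}, and Corollary~\ref{cor:22 additive} realizes $\HH^{2\sigma}(\Qpq 22)$ as a finitely generated $\HS$-module. The plan is to mimic the proofs of Lemmas~\ref{lem:db xs} and~\ref{lem:dd xs}: pick a convenient $\HS$-basis of $\HH^{2\sigma}(\Qpq 22)$ containing $x_{1,1}=(i_3)_!(1)$ along with the $RO(\Pi BU(1))$-graded elements (in some combination of $e^2$, $\zeta_0\cwd$, $\zeta_1\cxwd$, and correction terms like $e^{-2}\kappa\zeta_0\cwd\,x_{1,1}$), expand each of the three unknowns as
\[
    (i_k)_!(1) = \alpha_k\,x_{1,1} + \beta_k\,e^2 + \gamma_k\,\zeta_0\cwd + \delta_k\,\zeta_1\cxwd + \mu_k\,e^{-2}\kappa\zeta_0\cwd\,x_{1,1}
\]
with $\alpha_k\in A(\GG)$ and the remaining coefficients in $\Z$, and then pin down the coefficients via the two standard tools, $\rho$ (nonequivariant reduction) and $(-)^\GG$ (geometric fixed points).

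For the $\rho$-constraint, recall that $\Qpq 22\homeo \PP^1\times\PP^1$ nonequivariantly via the Segre embedding of \S\ref{sec:notations}, so each $i_k(\Xpq 1{})$ picks out a line in one of the two rulings and $\rho((i_k)_!(1))$ is one of the two standard generators of $H^2(\PP^1\times\PP^1;\Z)$. Combining this with $\rho(e^2)=0$ and the known reductions $\rho(\zeta_0\cwd)$, $\rho(\zeta_1\cxwd)$, $\rho(x_{1,1})$, $\rho(e^{-2}\kappa\zeta_0\cwd\,x_{1,1})=0$, one reads off $\rho(\alpha_k)$, $\gamma_k$, and $\delta_k$ from a small linear system. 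For the $(-)^\GG$-constraint, $(i_k)_!(1)$ restricts to $0$ at the two fixed points $\Qpq 22^{ij}$ outside $i_k(\Xpq 1{})$ and to the $\GG$-equivariant Euler class of the normal representation of $i_k$ at the two fixed points inside it; these Euler classes are read off directly from the defining formulas of $i_k$. Matching these values against the fixed-point restrictions of the basis elements (where, for example, $(e^2)^\GG$ is supported away from $\Qpq 22^{01}$ and $\Qpq 22^{10}$ while $x_{1,1}^\GG$ is supported on $\Qpq 22^{11}$ and $\Qpq 22^{10}$) fixes $\alpha_k^\GG$, $\beta_k$, and $\mu_k$, and hence $\alpha_k$ itself in $A(\GG)$.

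The main obstacle is bookkeeping rather than mathematics: keeping track of which basis elements contribute in each grading, getting the signs right given the modified Segre embedding used in \S\ref{sec:notations}, and verifying that the relations among the candidate basis elements (in particular the identity $\zeta_1\cxwd = (1-\kappa)\zeta_0\cwd + e^2$ from the $BU(1)$-calculation and the divisibility of $x_{1,1}$ by $\zeta_{00}$) do not introduce ambiguity. Once the linear system is assembled, the appearance of $(1-\kappa)$ and $(1-\epsilon)=1-e^{-2}\kappa\zeta_0\cwd$ in the final coefficients comes out cleanly: $(1-\kappa)$ records the discrepancy between the single nonequivariant class and the two equivariant fixed-point values, exactly as in Lemmas~\ref{lem:db xs} and~\ref{lem:dd xs}, while $(1-\epsilon)$ enters because the correction term $e^{-2}\kappa\zeta_0\cwd\,x_{1,1}$ must combine with $x_{1,1}$ to produce the stated unit multiple, using that $1-\epsilon$ is a unit by \cite[Proposition~11.5]{Co:InfinitePublished}.
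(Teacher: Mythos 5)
Your proposal is correct and follows essentially the same route as the paper: expand each $(i_k)_!(1)$ in an $A(\GG)$/$\Z$-linear combination over a basis of $\HH^{2\sigma}(\Qpq 22)$ built from $x_{1,1}$, $e^2$, $\zeta_0\cwd$, and the correction term $e^{-2}\kappa\zeta_0\cwd x_{1,1}$, then pin down the coefficients with $\rho$ and $(-)^\GG$, the unit $1-\epsilon$ appearing when the correction term is absorbed into the $x_{1,1}$ coefficient. One slip to fix before executing the computation: $(e^2)^\GG = (1,1,1,1)$, i.e.\ $e^2$ restricts nontrivially to all four fixed components (it is not supported away from $\Qpq 22^{01}$ and $\Qpq 22^{10}$); with that corrected, your linear systems yield exactly the stated formulas.
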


\begin{proof}
All of these elements live in grading $2\sigma \in RO(\GG)$, and we can use as a basis of
$\HH^{RO(\GG)}(\Qpq 22)$ the set
\[
    \{ 1, \zeta_0\cwd, x_{1,1}, \zeta_0\cwd x_{1,1} \}.
\]
From this, we can write
\[
    (i_2)_!(1) = \alpha x_{1,1} + \beta \zeta_0\cwd + \gamma e^2 + \delta e^{-2}\kappa \zeta_0\cwd x_{1,1},
\]
for some $\alpha,\beta \in A(\GG)$ and $\gamma,\delta\in\Z$.
Applying $\rho$, we have
\begin{align*}
    \rho((i_2)_!(1)) &= c - y \\
    \rho(x_{1,1}) &= y \\
    \rho(\zeta_0\cwd) &= c \\
    \rho(e^2) &= 0 \\
    \rho(e^{-2}\kappa\zeta_0\cwd x_{1,1}) &= 0,
\end{align*}
so $\rho(\alpha) = -1$ and $\rho(\beta) = 1$.
Taking fixed points (and listing the components in the order $00$, $11$, $01$, $10$), we have
\begin{align*}
    (i_2)_!(1)^\GG &= (0,1,1,0) \\
    x_{1,1}^\GG &= (0,1,0,1) \\
    (\zeta_0\cwd)^\GG &= (0,0,1,1) \\
    (e^2)^\GG &= (1,1,1,1) \\
    (e^{-2}\kappa\zeta_0\cwd x_{1,1}) &= (0,0,0,2).
\end{align*}
This gives us $\alpha^\GG = 1$, $\beta^\GG = 1$, $\gamma = 0$, and $\delta = -1$.
Thus, $\alpha = -(1-\kappa)$ and $\beta = 1$, and we have
\begin{align*}
    (i_2)_!(1) &= -(1-\kappa) x_{1,1} + \zeta_0\cwd - e^{-2}\kappa \zeta_0\cwd x_{1,1} \\
    &= -(1-\kappa)(1-\epsilon) x_{1,1} + \zeta_0\cwd 
\end{align*}
The other calculations are similar, using
\begin{align*}
    \rho((i_1)_!(1)) &= c-y & \rho((i_0)_!(1)) &= y  \\
    (i_1)_!(1)^\GG &= (1,0,0,1) & (i_0)_!(1)^\GG &= (1,0,1,0). \qedhere
\end{align*}
\end{proof}

Finally, the multiplicative structure.

\begin{theorem}\label{thm:22 multiplicative}
As an algebra over $\HH^{RO(\Pi BU(1))}(B U(1))$,  $\HH^\gr(\Qpq 22)$ is generated by
\begin{align*}
    \zeta_{00} &\in \HH^{\Omega_{00}}(\Qpq 22), \\
    \zeta_{11} &\in \HH^{\Omega_{11}}(\Qpq 22), \\
    \zeta_{01} &\in \HH^{\Omega_{01}}(\Qpq 22), \\
    \zeta_{10} &\in \HH^{\Omega_{10}}(\Qpq 22), \text{ and} \\
    x_{1,1} &\in \HH^{2\sigma}(\Qpq 22),
\end{align*}
subject to the facts that
\begin{alignat*}{2}
    &\cwd &\quad&\text{is infinitely divisible by $\zeta_0$,} \\
    &\cxwd &&\text{is infinitely divisible by $\zeta_1$,} \\
    &x_{1,1} &&\text{is infinitely divisible by $\zeta_{00}\zeta_{01}$,} \\
    &x_{1,1} - \zeta_0\cwd &&\text{is infinitely divisible by $\zeta_{00}\zeta_{10}$,} \\
    &x_{1,1} - \zeta_1\cxwd &&\text{is infinitely divisible by $\zeta_{11}\zeta_{01}$,} \\
    &x_{1,1} - e^2 &&\text{is infinitely divisible by $\zeta_{11}\zeta_{10}$,} 
\end{alignat*}
and the following relations:
\begin{align*}
    \zeta_{00}\zeta_{11} &= \zeta_0 \\
    \zeta_{01}\zeta_{10} &= \zeta_1 \\
    x_{1,1}^2 &= e^2 x_{1,1} \\
   \cwd\cxwd &= \tau(\iota^{-2})\zeta_0\cwd x_{1,1}.
\end{align*}
\end{theorem}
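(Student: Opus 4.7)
The strategy mirrors that of the unnamed proposition preceding Theorem~\ref{thm:db multiplicative}: define an abstract algebra $A$ from the presentation in the theorem and show the natural ring map $\phi\colon A\to B = \HH^\gr(\Qpq 22)$ is an isomorphism coset by coset. The novelty is that now $RO(\Pi\Qpq 22)/RO(\Pi BU(1)) \iso \Z\{\Omega_{11},\Omega_{10}\}$ has rank two, so the comparison splits into the four quadrant cases of Proposition~\ref{prop:22 splitting}.

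First I would verify that the listed relations and divisibilities hold in $B$. The identities $\zeta_{00}\zeta_{11}=\zeta_0$ and $\zeta_{01}\zeta_{10}=\zeta_1$ come from \S\ref{sec:notations}; the relations $x_{1,1}^2 = e^2 x_{1,1}$ and $\cwd\cxwd = \tau(\iota^{-2})\zeta_0\cwd x_{1,1}$, together with the infinite divisibilities of $\cwd$ by $\zeta_0$ and $\cxwd$ by $\zeta_1$, specialize \cite[Theorem~7.6]{CH:QuadricsII} to $q=1$. The four remaining divisibilities follow from the geometric observation that each inclusion $i_k$ of Proposition~\ref{prop:22 cofibration} misses exactly two of the four fixed points, so by the argument of Lemma~\ref{lem:db x divisibility}, $(i_k)_!(1)$ is infinitely divisible by the product of the two corresponding $\zeta$'s. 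Lemma~\ref{lem:22 xs} then expresses each $(i_k)_!(1)$ as an explicit $A(\GG)$-linear combination of $x_{1,1}$ with one of $e^2$, $\zeta_1\cxwd$, $\zeta_0\cwd$; multiplying by the appropriate inverse unit and using $(1-\kappa)^2 = 1$, the invertibility of $1-\epsilon$ from \cite[Proposition~11.5]{Co:InfinitePublished}, and the identity $\kappa e^2 = 2e^2$ (hence $(1-\kappa)e^2 = -e^2$), converts each pushforward into a unit multiple of the appropriate one of $x_{1,1}-e^2$, $x_{1,1}-\zeta_1\cxwd$, or $x_{1,1}-\zeta_0\cwd$.

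Next, define $A$ by the presentation and show $\phi$ is an isomorphism one coset $m\Omega_{11}+n\Omega_{10}+RO(\Pi BU(1))$ at a time. The $RO(\Pi BU(1))$-graded piece $A^{RO(\Pi BU(1))}$ agrees with $\HH^{RO(\Pi BU(1))}(\Qpq 22)$ by the usual argument: any monomial in adjoined elements landing in $RO(\Pi BU(1))$ must have matching total degrees in each pair $(\zeta_{00},\zeta_{11})$ and $(\zeta_{01},\zeta_{10})$, and so reduces to the original ring. For a general coset, Proposition~\ref{prop:22 splitting} gives a split short exact sequence in $B$ with outer terms free over $\HS$; one builds a parallel short exact sequence in $A$ by letting $K$ be the submodule generated by the monomial $\zeta_{00}^{\max(-m,0)}\zeta_{11}^{\max(m,0)}\zeta_{01}^{\max(-n,0)}\zeta_{10}^{\max(n,0)}$ times the appropriate divisibility witness from $\{x_{1,1},\, x_{1,1}-e^2,\, x_{1,1}-\zeta_0\cwd,\, x_{1,1}-\zeta_1\cxwd\}$, chosen by quadrant to match the pushforward in Proposition~\ref{prop:22 splitting}. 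A five-lemma diagram then reduces isomorphism of $\phi$ on the coset to identifications of the two outer terms with shifts of $\HH^{RO(\Pi BU(1))}(\Xpq 1{})$, as given on the $B$-side by Proposition~\ref{prop:22 splitting} and on the $A$-side by the divisibility (for the submodule) and by multiplication by the appropriate monomial in $\zeta$'s on a quotient of $A^{RO(\Pi BU(1))}$ (for the quotient).

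The main obstacle I expect is injectivity of these multiplication maps, which requires the analogue of Lemma~\ref{lem:annihilator}: the annihilator of each $(i_k)_!(1)$ in the $RO(\Pi BU(1))$ grading should equal the image of its partner pushforward from Proposition~\ref{prop:22 splitting}. Unlike in the $\Qpq 2{2q+1}$ case, no $\GG$-homotopy argument is needed, since each $i_k^*$ appears directly in one case of Proposition~\ref{prop:22 splitting}: by the projection formula, $(i_k)_!(1)\cdot z = (i_k)_!(i_k^*(z))$ vanishes iff $i_k^*(z) = 0$ (using the injectivity of $(i_k)_!$), iff $z$ lies in the kernel of $i_k^*$, which is the image of the appropriate $(i_{k'})_!$. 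The remaining difficulty is largely bookkeeping --- matching four generators, four divisibility witnesses, four pushforwards, and four quadrants consistently so that all the five-lemma diagrams commute, and handling the asymmetry between the horizontal and vertical rulings carefully enough that the resulting annihilator relations interact correctly with the monomial-multiplication isomorphisms used in identifying the quotient terms.
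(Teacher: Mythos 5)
Your proposal is correct and follows essentially the same route as the paper: the paper's own proof establishes the divisibilities via Lemma~\ref{lem:22 xs} and the unit manipulations exactly as you describe, and then simply defers the sufficiency of the relations to "the argument outlined for Theorem~\ref{thm:db multiplicative}," which is the coset-by-coset comparison of an abstract presentation $A$ with $B$ that you spell out (now over the rank-two coset group, quadrant by quadrant via Proposition~\ref{prop:22 splitting}). Your observation that the annihilator step no longer needs the $\GG$-homotopy trick of Lemma~\ref{lem:annihilator}, because all four kernels are identified directly by Proposition~\ref{prop:22 splitting}, is a correct and welcome detail that the paper leaves implicit.
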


\begin{proof}
That the given elements generate follows from Proposition~\ref{prop:22 splitting}.
The divisibility properties of $\cwd$ and $\cxwd$ were shown in \cite{CH:QuadricsII}.
The divisibility of the other elements follow from the following facts, which come from
the definitions and from Lemma~\ref{lem:22 xs}:
\begin{align*}
    x_{1,1} &= (i_3)_!(1) \\
    x_{1,1} - \zeta_0\cwd &= -(1-\kappa)(1-\epsilon)(i_2)_!(1) \\
    x_{1,1} - \zeta_1\cxwd &= -(1-\epsilon)(i_1)_!(1) \\
    x_{1,1} - e^2 &= (1-\kappa)(i_0)_!(1),
\end{align*}
together with, for example, that $i_3^*(\zeta_{00}) = 1 = i_3^*(\zeta_{01})$.
The relations among the $\zeta$s come from the component structure of the fixed set,
while the other two relations are specializations of relations shown in \cite{CH:QuadricsII}.
That these relations suffice to determine the algebra is similar to the argument outlined
for Theorem~\ref{thm:db multiplicative}.
\end{proof}

We can restate this structure in terms of the identification 
\[
    s\colon \Xpq 1{}\times\Xpq 1{} \homeo \Qpq 22
\]
given in \S\ref{sec:notations}.
As in \cite{CH:bt2}, we write $\omega_1$ and $\omega_2$ for the two tautological line bundles over $\Xpq 1{}\times\Xpq 1{}$
coming from the two factors, we write $\clod$ and $\cltd$ for the Euler classes of their duals, and $\cxlod$ and $\cxltd$
for the Euler classes of $\chi\omega_1\dual$ and $\chi\omega_2\dual$, respectively.
We also have the line bundles $\omega_1\tensor\omega_2$ and $\chi\omega_1\tensor\omega_2$, with their respective
Euler classes $\cltensd$ and $\cxltensd$.
Note that we have the following commutative diagram, where $\tensor$ indicates
the map classifying $\omega_1\tensor\omega_2$.
\[
    \xymatrix{
        \Xpq 1{}\times\Xpq 1{} \ar[r]^-s_-\homeo \ar[d] & \Qpq 22 \ar[d] \\
        B T^2 \ar[r]^-\tensor & B U(1)
    }
\]
This allows us to consider $\HH^\gr(\Xpq 1{}\times\Xpq 1{})$ as an algebra over
the ring $\HH^{RO(\Pi BT^2)}(B T^2)$,
with the structure over $\HH^{RO(\Pi BU(1))}(B U(1))$ it inherits from $\Qpq 22$ given by restriction.

Working through the definitions and checking gradings, we can see the following.
\begin{align*}
    \clod &= [s(\Xq{}\times\Xpq 1{})]^* = (\zeta_{00}\zeta_{01})^{-1}[i_3(\Xpq 1{})]^* \\
        &= \zeta_{00}^{-1}\zeta_{01}^{-1}x_{1,1} \\
    \cxlod &= [s(\Xp{}\times\Xpq 1{})]^* = (\zeta_{11}\zeta_{10})^{-1}[i_0(\Xpq 1{})]^* \\
        &= (1-\kappa)\zeta_{11}^{-1}\zeta_{10}^{-1}(x_{1,1} - e^2) \\
    \cltd &= [s(\Xpq 1{}\times\Xq{})]^* = (\zeta_{00}\zeta_{10})^{-1}[i_2(\Xpq 1{})]^* \\
        &= -(1-\kappa)(1-\epsilon)\zeta_{00}^{-1}\zeta_{10}^{-1}(x_{1,1} - \zeta_0\cwd) \\
    \cxltd &= [s(\Xpq 1{}\times\Xp{})]^* = (\zeta_{11}\zeta_{01})^{-1}[i_1(\Xpq 1{})]^* \\
        &= -(1-\epsilon)\zeta_{11}^{-1}\zeta_{01}^{-1}(x_{1,1} - \zeta_1\cxwd) 
\end{align*}
As for the classes $\cltensd$ and $\cxltensd$, define the diagonal and antidiagonal by
\begin{alignat*}{2}
    \Delta&\colon \Xpq 1{}\to \Xpq 1{}\times \Xpq 1{} \qquad& \Delta[a:b] &= ([a:b], [a:b]) \\
    \widetilde\Delta&\colon \Xpq 1{}\to \Xpq 1{}\times\Xpq 1{} &
        \widetilde\Delta[a:b] &= ([a:b],[b:a]).
\end{alignat*}
Then
\begin{align*}
    \cltensd &= [s\widetilde\Delta(\Xpq 1{})]^* = \cwd \\
    \cxltensd &= [s\Delta(\Xpq 1{})]^* = \cxwd
\end{align*}
because we can, for example, represent $\cwd$ by
\[
    \{ [x:u:y:v]\in\Qpq 22 \mid x = u \} = s(\widetilde\Delta(\Xpq 1{})).
\]

These identifications give us corresponding divisibility properties of
$\clod$ and so on. 
We then get the following.

\begin{corollary}\label{cor:x11 times x11}
As an algebra over $\HH^{RO(\Pi BT^2)}(B T^2)$, $\HH^\gr(\Xpq 1{}\times \Xpq 1{})$
is determined by the facts that
\begin{alignat*}{2}
    &\clod &&\text{is infinitely divisible by $\zeta_{00}\zeta_{01}$,} \\
    &\cxlod &&\text{is infinitely divisible by $\zeta_{11}\zeta_{10}$,} \\
    &\cltd &&\text{is infinitely divisible by $\zeta_{00}\zeta_{10}$,} \\
    &\cxltd &&\text{is infinitely divisible by $\zeta_{11}\zeta_{01}$,} \\
    &\cltensd &\quad&\text{is infinitely divisible by $\zeta_{00}\zeta_{11}$,} \\
    &\cxltensd &&\text{is infinitely divisible by $\zeta_{01}\zeta_{10}$,}
\end{alignat*}
and the relations
\begin{align*}
    \clod\cxlod &= 0 \\
    \cltd\cxltd &= 0.
\end{align*}
\end{corollary}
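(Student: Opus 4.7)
The plan is to transport Theorem~\ref{thm:22 multiplicative} across the diffeomorphism $s\colon \Xpq 1{}\times\Xpq 1{}\homeo \Qpq 22$ and rephrase the resulting description in terms of the Euler classes of the six natural line bundles, using the identifications derived immediately before the statement. Since each of $\clod$, $\cxlod$, $\cltd$, $\cxltd$, $\cltensd$, $\cxltensd$ already lies in $\HH^{RO(\Pi BT^2)}(B T^2)$, no further generators are needed over that base ring; the content of the corollary lies entirely in the stated divisibility properties and the two relations.

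First I would verify the six divisibility claims one at a time. The identifications read $\clod = \zeta_{00}^{-1}\zeta_{01}^{-1}x_{1,1}$, $\cxlod = (1-\kappa)\zeta_{11}^{-1}\zeta_{10}^{-1}(x_{1,1}-e^2)$, and analogous formulas for $\cltd$ and $\cxltd$. Theorem~\ref{thm:22 multiplicative} asserts that each of $x_{1,1}$, $x_{1,1}-e^2$, $x_{1,1}-\zeta_0\cwd$, and $x_{1,1}-\zeta_1\cxwd$ is infinitely divisible by precisely the product of $\zeta$'s required to yield the claimed divisibility after the $\zeta$'s in the identification are absorbed, and the unit prefactors $1-\kappa$ and $1-\epsilon$ are harmless. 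The two divisibilities for $\cltensd = \cwd$ and $\cxltensd = \cxwd$ are exactly those recorded in Theorem~\ref{thm:22 multiplicative} for $\cwd$ and $\cxwd$.

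The two relations follow algebraically in one line: substituting the formulas gives
\[
    \clod\cxlod = (1-\kappa)\zeta_0^{-1}\zeta_1^{-1}\bigl(x_{1,1}^2 - e^2 x_{1,1}\bigr),
\]
which vanishes by the relation $x_{1,1}^2 = e^2 x_{1,1}$ from Theorem~\ref{thm:22 multiplicative}; the identical manipulation handles $\cltd\cxltd$. A geometric interpretation is also available, since $\omega_1\dual\dirsum\chi\omega_1\dual$ is pulled back from $\Xpq 1{}$ along the first projection, and the corresponding relation on $\Xpq 1{}$ follows from the decomposition of its trivial bundle.

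Finally, to show that these properties actually \emph{determine} the algebra over $\HH^{RO(\Pi BT^2)}(B T^2)$, I would mimic the coset-by-coset argument used in the proof of the proposition preceding Theorem~\ref{thm:db multiplicative}, with Proposition~\ref{prop:22 splitting} (translated across $s$) providing the additive input on each coset of $RO(\Pi BU(1))$ inside $RO(\Pi BT^2)$. The principal obstacle I anticipate is bookkeeping of the unit factors $1-\kappa$ and $1-\epsilon$ from Lemma~\ref{lem:22 xs}, and in particular checking that the divisibilities claimed for $\cltd$, $\cxltd$, and $\cxlod$ really are equivalent to, and not merely parallel to, the divisibilities of $x_{1,1}-\zeta_0\cwd$, $x_{1,1}-\zeta_1\cxwd$, and $x_{1,1}-e^2$ recorded in Theorem~\ref{thm:22 multiplicative}.
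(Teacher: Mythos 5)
Your verification that the six divisibilities transfer and that the two relations hold is essentially sound, and it takes a slightly different computational route from the paper (you substitute the $x_{1,1}$-expressions and compute in the $\Qpq 22$ variables, whereas the paper works directly with identities among $\clod$, $\cltd$, $\cltensd$, etc.\ imported from \cite{CH:bt2}). One caveat: the claim that the ``identical manipulation'' disposes of $\cltd\cxltd$ undersells that step. Expanding $(x_{1,1}-\zeta_0\cwd)(x_{1,1}-\zeta_1\cxwd)$ requires not only $x_{1,1}^2=e^2x_{1,1}$ but also the relation $\cwd\cxwd=\tau(\iota^{-2})\zeta_0\cwd x_{1,1}$ from Theorem~\ref{thm:22 multiplicative} together with identities in the base ring such as $\zeta_1\cxwd=(1-\kappa)\zeta_0\cwd+e^2$; it does come out to zero, but only after all of these are fed in, so it is not the one-line computation that handles $\clod\cxlod$.

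The genuine gap is in the ``determined by'' clause. Showing that the relations of Theorem~\ref{thm:22 multiplicative} \emph{imply} $\clod\cxlod=0$ and $\cltd\cxltd=0$ is only half of what is needed: to conclude that the corollary's short list determines the algebra, you must also prove the converse, namely that these two relations, together with what already holds in $\HH^{RO(\Pi BT^2)}(BT^2)$, recover the relations $x_{1,1}^2=e^2x_{1,1}$ and $\cwd\cxwd=\tau(\iota^{-2})\zeta_0\cwd x_{1,1}$ (equivalently $\clod[2]=e^2\zeta_{00}^{-1}\zeta_{01}^{-1}\clod$ and $\cltensd\cxltensd=\tau(\iota^{-2})\zeta_{00}^2\zeta_{01}\zeta_{11}\clod\cltensd$), so that the determination already established in Theorem~\ref{thm:22 multiplicative} carries over. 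This equivalence is the actual content of the paper's proof; its key input is the identity $\cltensd\cxltensd=\clod\cxlod+\cltd\cxltd+\tau(\iota^{-2})\zeta_{00}^2\zeta_{01}\zeta_{10}\clod\cltd$ from \cite{CH:bt2}, which makes the two relation sets visibly interchangeable. Your proposed alternative---rerunning the coset-by-coset splitting argument over $\HH^{RO(\Pi BT^2)}(BT^2)$---would amount to reproving Theorem~\ref{thm:22 multiplicative} from scratch in new variables, now over a rank-two group of cosets, and it is left entirely unexecuted; the worry you flag at the end, whether the listed properties are ``equivalent to, and not merely parallel to'' those of the theorem, is precisely the point that must be, and in the paper is, proved.
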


\begin{proof}
Comparing to Theorem~\ref{thm:22 multiplicative}, which gives the structure as a module over $\HH^{RO(\Pi BU(1))}(B U(1))$,
all of the generators listed there already exist in the cohomology
of $B T^2$, though without the divisibility properties, so it suffices to adjoin the implied quotients
(the elements $(\zeta_{00}\zeta_{01})^{-k}\clod$ and so on, with the relations that say how they behave)
to generate the algebra.

The relations $\zeta_{00}\zeta_{11} = \zeta_0$ and $\zeta_{01}\zeta_{10} = \zeta_1$ already hold in
$\HH^{RO(\Pi BT^2)}(B T^2)$.
The other two relations given by the theorem translate into
\begin{align*}
    \clod[2] &= e^2\zeta_{00}^{-1}\zeta_{01}^{-1}\clod \\
    \cltensd\cxltensd &= \tau(\iota^{-2})\zeta_{00}^2\zeta_{01}\zeta_{11}\clod\cltensd.
\end{align*}
The first of these is equivalent to $\clod\cxlod = 0$: If we assume that $\clod[2] = e^2\zeta_{00}^{-1}\zeta_{01}^{01}\clod$, then
\begin{align*}
    \clod\cxlod &= \clod\cdot\zeta_{11}^{-1}\zeta_{10}^{-1}((1-\kappa)\zeta_{00}\zeta_{01}\clod + e^2) \\
    &= \zeta_{11}^{-1}\zeta_{10}^{-1}(-e^2\clod + e^2\clod) \\
    &= 0.
\end{align*}
On the other hand, if we assume that $\clod\cxlod = 0$, then
\[
    \clod[2] = \zeta_{00}^{-1}\zeta_{01}^{-1}\clod((1-\kappa)\zeta_{11}\zeta_{10}\cxlod + e^2) \\
    = e^2\zeta_{00}^{-1}\zeta_{01}^{-1}\clod.
\]

Assuming now that $\clod\cxlod = 0$, we show that $\cltd\cxltd = 0$ is equivalent to
$\cltensd\cxltensd = \tau(\iota^{-2})\zeta_{00}^2\zeta_{01}\zeta_{11}\clod\cltensd$.
Using relations from \cite{CH:bt2}, we can rewrite the latter expression as
\begin{align*}
    \tau(\iota^{-2})\zeta_{00}^2\zeta_{01}&\zeta_{11}\clod\cltensd \\
    &= \tau(\iota^{-2})\zeta_{00}\zeta_{01}\zeta_{11}\clod
        (\zeta_{10}\cxlod + \zeta_{01}\cxltd 
	       - e^{-2}\kappa \zeta_{01}\zeta_{10}\zeta_{11}\cxlod\cxltd) \\
    &= \tau(\iota^{-2})\zeta_{00}\zeta_{01}^2\zeta_{11}\clod\cxltd \\
    &= \tau(\iota^{-2})\zeta_{00}^2\zeta_{01}\zeta_{10}\clod\cltd.
\end{align*}
But we also have from \cite{CH:bt2} that
\begin{align*}
    \cltensd\cxltensd &= \clod\cxlod + \cltd\cxltd + \tau(\iota^{-2})\zeta_{00}^2\zeta_{01}\zeta_{10}\clod\cltd \\
    &= \cltd\cxltd + \tau(\iota^{-2})\zeta_{00}^2\zeta_{01}\zeta_{10}\clod\cltd,
\end{align*}
so $\cltd\cxltd = 0$ if and only if $\cltensd\cxltensd = \tau(\iota^{-2})\zeta_{00}^2\zeta_{01}\zeta_{10}\clod\cltd$.

Thus, the relations given in the corollary are equivalent to those in Theorem~\ref{thm:22 multiplicative},
given the relations known to hold in $\HH^{RO(\Pi BT^2)}(B T^2)$.
\end{proof}

\begin{remark}
Note that $\cltensd\cxltensd \neq 0$, but, as shown in the proof above,
\[
    \cltensd\cxltensd = \tau(\iota^{-2})\zeta_{00}^2\zeta_{01}\zeta_{10}\clod\cltd.
\]
We can check that this is reasonable if we reduce to the nonequivariant case. 
Writing $x_1$ and $x_2$ for the nonequivariant multiplicative generators
of the cohomology of $\PP^1\times\PP^1$, and grading on $\Z$, we have
\begin{align*}
    \rho(\clod) &= x_1 & \rho(\cxlod) &= x_1 \\
    \rho(\cltd) &= x_2 & \rho(\cxltd) &= x_2 \\
    \rho(\cltensd) &= x_1 + x_2 & \rho(\cxltensd) &= x_1 + x_2.
\end{align*}
$x_1^2 = 0$ and $x_2^2 = 0$, but $(x_1+x_2)^2 = 2x_1x_2 \neq 0$.

Identifying $\PP^1\times\PP^1$ with $\Qexp 4$, the usual generators of the cohomology of $\Qexp 4$
would be $y = x_1$ and $c = x_1 + x_2$.
\end{remark}

\begin{remark}
In \cite{CH:QuadricsII} we noted that $1-e^{-2}\kappa x_{1,1}$ is a unit in the cohomology of $\Qpq 22$,
because $(1-e^{-2}\kappa x_{1,1})^2 = 1$.
Identifying $\Qpq 22$ with $\Xpq 1{}\times \Xpq 1{}$, we identified $x_{1,1}$ with
$\zeta_{00}\zeta_{01}\clod$.
If $\pi_1\colon \Xpq 1{}\times \Xpq 1{}\to \Xpq 1{}$ is projection to the first factor, then
\[
    1 - e^{-2}\kappa x_{1,1} =
    1 - e^{-2}\kappa \zeta_{00}\zeta_{01}\clod = \pi_1^*(1 - e^{-2}\kappa\zeta_0 \cwd)
\]
and, as we noted earlier,
$1-e^{-2}\kappa\zeta_0\cwd$ was shown to be a unit in the cohomology of $BU(1)$
in \cite{Co:InfinitePublished}.
\end{remark}

\bibliography{Bibliography}{}
\bibliographystyle{amsplain} 

\end{document}